\newtheorem{thm}{Theorem}
\newtheorem{lem}{Lemma}
\newtheorem{cor}{Corollary}
\newtheorem{rem}{Remark}
\newtheorem{exam}{Example}
\title{A Revisit to Quadratic Programming with One Inequality
Quadratic Constraint via Matrix Pencil \thanks{This research was
supported by Taiwan NSC 98-2115-M-006-010-MY2, by National Natural
Science Foundation of China under Grants 11001006 and
91130019/A011702, and by the fund of State Key Laboratory of
Software Development Environment under grant SKLSDE-2013ZX-13.}}
\author{Yong  Hsia\footnotemark[1]\and  Gang-Xuan Lin\footnotemark[2] \and Ruey-Lin Sheu\footnotemark[2]}
\begin{document}
\maketitle
\renewcommand{\thefootnote}{\fnsymbol{footnote}}

\footnotetext[1]{LMIB of the Ministry of Education; School of
Mathematics and System Sciences, Beihang University, Beijing,
100191, P. R. China ({\tt dearyxia@gmail.com}).}

\footnotetext[2]{Department of Mathematics, National Cheng Kung University, Taiwan
 ({\tt rsheu@mail.ncku.edu.tw}).}

\begin{abstract}
The quadratic programming over one inequality quadratic constraint
(QP1QC) is a very special case of quadratically constrained
quadratic programming (QCQP) and attracted much attention since
early 1990's. It is now understood that, under the primal Slater
condition, (QP1QC) has a tight SDP relaxation ${\rm (P_{\rm SDP})}$.
The optimal solution to (QP1QC), if exists, can be obtained by a
matrix rank one decomposition of the optimal matrix $X^*$ to ${\rm
(P_{\rm SDP})}$. In this paper, we pay a revisit to (QP1QC) by
analyzing the associated matrix pencil of two symmetric real
matrices $A$ and $B$, the former matrix of which defines the
quadratic term of the objective function whereas the latter for the
constraint. We focus on the ``undesired'' (QP1QC) problems which are
often ignored in typical literature: either there exists no Slater
point, or (QP1QC) is unbounded below, or (QP1QC) is bounded below
but unattainable. Our analysis is conducted with the help of the
matrix pencil, not only for checking whether the undesired cases do
happen, but also for an alternative way otherwise to compute the
optimal solution in comparison with the usual
SDP/rank-one-decomposition procedure.

\end{abstract}

{\bf Keywords}:  Quadratically constrained quadratic program, matrix
pencil, hidden convexity, Slater condition, unattainable SDP,
simultaneously diagonalizable with congruence.

\pagestyle{myheadings}
\thispagestyle{plain}

\section{Introduction}
Quadratically constrained quadratic programming (QCQP) is a
classical nonlinear optimization problem which minimizes a quadratic
function subject to a finite number of quadratic constraints. A
general (QCQP) problem can not be solved exactly and is known to be
NP-hard. (QCQP) problems with a single quadratic constraint,
however, is polynomially solvable. It carries the following format
\begin{eqnarray}\label{eq0-1}
\begin{array}{rcl}
{\rm (QP1QC)}:~~ \inf &&F(x)= x^TAx-2f^Tx \cr    %
{\rm s.t.} && G(x)=x^TBx-2g^Tx\le \mu,
\end{array}
\end{eqnarray}
where $A, B$ are two $n\times n$ real symmetric matrices, $\mu$ is a
real number and $f, g$ are two $n\times 1$ vectors.

The problem (QP1QC) arises from many optimization algorithms, most
importantly, the trust region methods. See, e.g., \cite{ga,ms} in
which $B\succ0$ and $g=0$. Extensions to an indefinite $B$ but still
with $g=0$ are considered in \cite{Ben, fw, Martinez, sw, xfgsz}.
Direct applications of (QP1QC) having a nonhomogeneous quadratic
constraint can be found in solving an inverse problem via
regularization \cite{inverse, Golub} and in minimizing the double
well potential function \cite{flsx}. Solution methods for the
general (QP1QC) can be found in \cite{flsx2010,More,pt}.

As we can see from the above short list of review, it was not
immediately clear until rather recent that the problem (QP1QC)
indeed belongs to the class P. Two types of approaches have been
most popularly adopted: the (primal) semidefinite programming (SDP)
relaxation and the Lagrange dual method. Both require the (QP1QC)
problem to satisfy either the primal Slater condition (e.g.,
\cite{flsx2010, More, xfgsz, yz}):
\\[+5pt]
\textbf{Primal Slater Condition}:
There exists an $x_0$ such that $G(x_0)=x_0^TBx_0-2g^Tx_0< \mu$;\ \\[+7pt]
and/or the dual Slater condition (e.g., \cite{Ben, flsx2010, More,
xfgsz, yz}):
\\[+5pt]
\textbf{Dual Slater Condition}:
There exists a $\sigma'\ge 0$ such that $A+\sigma' B\succ0.$
\\[+5pt]
Nowadays, with all the efforts and results in the literature,
(QP1QC) is no longer a difficult problem and its structure can be
understood easily with a standard SDP method, the conic duality, the
S-lemma, and a rank-one decomposition procedure.

Under the primal Slater condition, the Lagrangian function of
(QP1QC) is
\begin{equation}\label{Lagrange}
d(\sigma)=\inf_{x\in{\mathbb R}^n} L(x,\sigma):=x^T(A+\sigma
B)x-2(f+\sigma g)^Tx -\mu\sigma,~~\sigma\ge 0,
\end{equation}
with which the dual problem of (QP1QC) can be formulated as
\[
  {\rm(D)}~~~~ \sup_{\sigma\ge 0}   d(\sigma).
 \]
Using Shor's relaxation scheme \cite{Shor}, the above Lagrange dual
problem (D) has a semidefinite programming reformulation:
\begin{eqnarray}\label{Dsdp}
\begin{array}{rcl}
{\rm (D_{\rm SDP})}: ~~\sup && s \cr    %
 {\rm s.t.} && \left[\begin{array}{cc}A+\sigma B& -f-\sigma g\\-f^T-\sigma g^T&-\mu\sigma-s\end{array} \right]
   \succeq 0,\\
 &&\sigma\geq 0.
\end{array}
\end{eqnarray}
The conic dual of ${\rm (D_{\rm SDP})}$ turns out to be the SDP
relaxation of (QP1QC):
\begin{eqnarray}\label{Psdp}
\begin{array}{rcl}
{\rm (P_{\rm SDP})}:~~ \inf &&  \left[\begin{array}{cc} A& -f\\-f^T&0 \end{array} \right] \bullet Y \cr    %
 {\rm s.t.} && \left[\begin{array}{cc}B&-g\\-g^T&-\mu \end{array} \right] \bullet Y \leq
 0,\\
 &&Y_{n+1,n+1}=1\\
 &&Y\succeq 0,
\end{array}
\end{eqnarray}
where $X\bullet  Y:={\rm trace}(X^TY)$ is the usual matrix inner
product.  Let $v(\cdot)$ denote the optimal value of problem
$(\cdot)$. By the weak duality, we have
\begin{equation}
v({\rm QP1QC})\ge v{\rm (P_{\rm SDP})} \ge v{\rm (D_{\rm
SDP})}.\label{wd}
\end{equation}

To obtain the strong duality, observe that
\begin{eqnarray*}
\begin{array}{rl}
\displaystyle{v({\rm QP1QC})=\inf_{x\in\mathbb{R}}\left\{F(x)\bigg|G(x)\leq\mu\right\}}
=\sup\left\{s\in\mathbb{R}\bigg|\left\{x\in\mathbb{R}^n|F(x)-s<0,G(x)-\mu\leq0\right\}
=\emptyset\right\}.
\end{array}
\end{eqnarray*}
Under the primal Slater condition and by the S-lemma \cite{pt}, we get 
\begin{eqnarray}\label{equation2}
\begin{array}{l}
~~~\sup\left\{s\in\mathbb{R}\bigg|\left\{x\in\mathbb{R}^n|F(x)-s<0,G(x)-\mu\leq0\right\}=\emptyset\right\}\\
=\sup\left\{s\in\mathbb{R}\bigg|\exists \sigma\ge0 {\hbox{ such that }} F(x)-s+\sigma G(x)-\sigma\mu\geq0,\ \forall x\in\mathbb{R}^n\right\}\\
=\left\{\begin{array}{rl}
\sup&s\\
\mbox{s.t.}&\left[\begin{array}{cc}
A+\sigma B&-f-\sigma g\\
-f^T-\sigma g^T&-\mu\sigma-s
\end{array}\right]
\bullet\left[\begin{array}{cc}
xx^T&x\\x^T&1
\end{array}\right]\geq0,\ \forall x\in\mathbb{R}^n\\
&\sigma\geq0
\end{array}\right.\\
=\left\{\begin{array}{rl}
\sup&s\\
\mbox{s.t.}&\left[\begin{array}{cc}
A+\sigma B&-f-\sigma g\\
-f^T-\sigma g^T&-\mu\sigma-s
\end{array}\right]
\succeq0\\
&\sigma\geq0,
\end{array}\right.
\end{array}
\end{eqnarray}
where the last equality holds because
\begin{equation*}
{\left[\begin{array}{cc}
A+\sigma B&-f-\sigma g\\
-f^T-\sigma g^T&-\mu\sigma-s
\end{array}\right]}
\bullet {\footnotesize\left[\begin{array}{cc} xx^T&x\\x^T&1
\end{array}\right]}\geq0,\ \forall x\in\mathbb{R}^n
\Leftrightarrow
{\left[\begin{array}{cc}
A+\sigma B&-f-\sigma g\\
-f^T-\sigma g^T&-\mu\sigma-s
\end{array}\right]}\succeq0.
\end{equation*}
Combining (\ref{Dsdp}), (\ref{wd}) and (\ref{equation2}) together,
we conclude that
\begin{eqnarray}\label{equation3}
v({\rm QP1QC})=v{\rm (P_{\rm SDP})}=v{\rm (D_{\rm SDP})},
\end{eqnarray}
which says that the SDP relaxation of (QP1QC) is tight. The strong
duality result also appeared, e.g., in \cite[Appendix B]{boyd}.

To obtain a rank-one optimal solution of $\rm (P_{\rm SDP})$ having
the format $X^*=\left[\left(x^*\right)^T,
1\right]^T\left[\left(x^*\right)^T, 1\right]$, Sturm and Zhang
\cite{sz} provides the following rank-one decomposition procedure.
Suppose $X^*$ is a positive semidefinite optimal matrix obtained
from solving $\rm (P_{\rm SDP})$ and the rank of $X^*$ is $r$. First
compute a rank one decomposition $X^*=\sum_{i=1}^rp_ip_i^T$. Denote
$$M(G)=\left[\begin{array}{cc}B&-g\\-g^T&-\mu \end{array} \right].$$
If $\left(p_1^TM(G)p_1\right)\left(p_i^TM(G)p_i\right)\geq0$ for all
$i=1,2,\ldots,r$, set $y=p_1$. Otherwise, set
$\displaystyle{y=\frac{p_1+\alpha p_j}{\sqrt{1+\alpha^2}}}$ where
$\left(p_1^TM(G)p_1\right)\left(p_j^TM(G)p_j\right)<0$ and $\alpha $
satisfies $\left(p_1+\alpha p_j\right)^T M(g)\left(p_1+\alpha
p_j\right)=0$. Then, $X^*=X^*-yy^T$ is an optimal solution for $\rm
(P_{\rm SDP})$ of rank $r-1$. Repeat the procedure until $X^*$ is
eventually reduced to a rank one matrix.

Clearly, by the SDP relaxation, first one lifts the (QP1QC) problem
into a linear matrix inequality in a (much) higher dimensional space
where a conic (convex) optimization problem $\rm (P_{\rm SDP})$ is
solved, then followed by a matrix rank-one decomposition procedure
to run it back to a solution of (QP1QC). The idea is neat, but it
suffers from a huge computational task should a large scale (QP1QC)
be encountered.

In contrast, the Lagrange dual approach seems to be more natural but
the analysis is often subject to serious assumptions. We can see
that the Lagrange function (\ref{Lagrange}) makes a valid lower
bound for the (QP1QC) problem only when $A+\sigma B\succeq0$.
Furthermore, the early analysis, such as those in \cite{Ben,
flsx2010, More, xfgsz, yz}, all assumed the dual Slater condition in
order to secure the strong duality result. Actually, the dual Slater
condition is a very restrictive assumption since it implies that,
see \cite{Horn}, the two matrices $A$ and $B$ can be simultaneously
diagonalizable via congruence (SDC), which itself is already very
limited:
\\[+5pt]
\textbf{Simultaneously Diagonalizable via Congruence (SDC)}: $A$ and
$B$ are simultaneously diagonalizable via congruence (SDC) if there
exists a nonsingular matrix $C$ such that both $C^TAC$ and $C^TBC$
are diagonal.

Nevertheless, solving (QP1QC) via the Lagrange dual enjoys a
computational advantage over the SDP relaxation. It has been studied
thoroughly in \cite{flsx2010} that the optimal solution $x^*$ to
(QP1QC) problems, while satisfying the dual Slater condition, is
either $\displaystyle\bar
x=\lim\limits_{\sigma\rightarrow\sigma^*}(A+\sigma B)^{-1}(f+\sigma
g)$, or obtained from $\bar x + \alpha_0 \tilde x$ where $\sigma^*$
is the dual optimal solution, $\tilde x$ is a vector in the null
space of $A+\sigma^*B$ and $\alpha_0$ is some constant from solving
a quadratic equation. For (QP1QC) problems under the SDC condition
while violating the dual Slater condition, they are either unbounded
below or can be transformed equivalently to an unconstraint
quadratic problem. Since the dual problem is a single-variable
concave programming, it is obvious that the computational cost for
$x^*$ using the dual method is much cheaper than solving a
semidefinite programming plus an additional run-down procedure.

Our paper pays a revisit to (QP1QC) trying to solve it by continuing
and extending the study beyond the dual Slater condition $A+\sigma
B\succ0$ (or SDC) into the hard case that $A+\sigma B\succeq0.$ Our
analysis was motivated by an early result by J. J. Mor$\acute{\rm
e}$ \cite[Theorem 3.4]{More} in 1993, which states:
\\[+5pt]
{\it Under the primal Slater condition and assuming that $B\neq0$, a
vector $x^*$ is a global minimizer of (QP1QC) if and only if there
exists a pair $(x^*,\sigma^*)\in R^n\times R$ such that $x^*$ is
primal feasible satisfying $x^{*T}Bx^*-2g^Tx^*\leq \mu$;
$\sigma^*\ge0$ is dual feasible; the pair satisfies the
complementarity $\sigma^*(x^{*T}Bx^*-2g^Tx^*-\mu)=0$; and a second
order condition $A+\sigma^*B \succeq 0$ holds.}
\\[+5pt]
Notice that the speciality of the statement is to obtain a superior
version of the second order necessary condition $A+\sigma^*B \succeq
0$. Normally in the context of a general nonlinear programming, one
can only conclude that $A+\sigma^*B$ is positive semidefinite on
{\it the tangent subspace} of $G(x^*)=0$ when $x^*$ lies on the
boundary. The original proof was lengthy, but it can now be
understood from the theory of semidefinite programming without much
difficulty.

To this end, we first solve (QP1QC) separately without the primal
Slater condition in Section 2. In Section 3, we establish new
results for the positive semideifinite matrix pencil
$$I_{\succeq}(A,B)=\{\sigma\in \mathbb{R}\mid ~ A+\sigma B \succeq
0\}$$ with which, in Section 4, we characterize necessary and
sufficient conditions for two unsolvable (QP1QC) cases: either
(QP1QC) is unbounded below, or bounded below but unattainable. The
conditions are not only polynomially checkable, but provide a
solution procedure to obtain an optimal solution $x^*$ of (QP1QC)
when the problem is solvable. In Section 5, we use numerical
examples to show why (QP1QC), while failing the (SDC) condition,
become the hard case. Conclusion remarks are made in section
\ref{conclusions}.

\section{(QP1QC) Violating Primal Slater Condition}\label{chapter_PSCfailed}

In this section, we show that (QP1QC) with no Slater point can be
separately solved without any condition. Notice that, if $\mu>0$,
$x=0$ satisfies the Slater condition. Consequently, if the primal
Slater condition is violated, we must have $\mu\leq0$, and
\begin{eqnarray}\label{PSCfailed}
\min_{x\in {\mathbb R}^n} G(x)=x^TBx-2g^Tx\geq\mu,\end{eqnarray}
which implies that $G(x)$ is bounded from below and thus
$$B\succeq0;~g\in \mathcal{R}(B);~G(B^+g)=-g^TB^+g\geq\mu$$
where $\mathcal{R}(B)$ is the range space of $B$, and $B^+$ is the
Moore-Penrose generalized inverse of $B$. Conversely, if
$B\succeq0$, then $G(x)$ is convex. Moreover, if
$g\in\mathcal{R}(B)$, the set of critical points
$\left\{x\big|Bx=g\right\}\neq\emptyset$, and every point in this
set assumes the global minimum $-g^TB^+g$. Suppose
$G(B^+g)=-g^TB^+g\geq\mu$, it implies that
$\left\{x\big|x^TBx-2g^Tx<\mu\right\}=\emptyset$. In other words,
$$
\mbox{(QP1QC) violates the primal Slater condition if and only if }
\left\{\begin{array}{l}
\mu\leq0;\\
B\succeq0;\\
g\in R(B);\\
-g^TB^+g\geq\mu.
\end{array}\right.
$$
In this case, the constraint set is reduced to
\begin{eqnarray*}
\left\{x\big|g(x)\leq\mu\right\}
=\left\{
\begin{array}{ll}
\emptyset,&\mbox{ if } -g^TB^+g>\mu; \\
\left\{x\big|Bx=g\right\},&\mbox{ if } -g^TB^+g=\mu.
\end{array}
\right.
\end{eqnarray*}
Namely, (QP1QC) is either infeasible (when $-g^TB^+g>\mu$) or
reduced to a quadratic programming with one linear equality
constraint. In the latter case, the following problem is to be
dealt:
\begin{eqnarray}\label{PSCfailed2}
\begin{array}{rl}
\displaystyle{\inf_{x\in\mathbb{R}^n}}&x^TAx-2f^Tx\\
\mbox{s.t.}&Bx=g.
\end{array}
\end{eqnarray}
All the feasible solutions of (\ref{PSCfailed2}) can be expressed as
$x=B^+g+Vy,\ \forall y\in\mathbb{R}^{\mbox{dim }N(B)}$ where $N(B)$
denotes the null space of $B$. The objective function becomes
\begin{eqnarray*}
\begin{array}{rl}
x^TAx-2f^Tx&=\left(B^+g+Vy\right)^TA\left(B^+g+Vy\right)
-2f^T\left(B^+g+Vy\right)\\
&=y^TV^TAVy+2\left(V^TAB^+g-V^Tf\right)^Ty+g^TB^+AB^+g-2f^TB^+g\\
&=z^T\hat{D}z+2\Lambda^Tz+\alpha,
\end{array}
\end{eqnarray*}
where $z=Q^Ty$, $Q$ is an orthonormal matrix such that
$V^TAV=Q\hat{D}Q^T$;
$\hat{D}=diag\left(\hat{d}_1,\hat{d}_2,\ldots,\hat{d}_n\right)$;
$\Lambda=Q^T\left(V^TAB^+g-V^Tf\right)$ and
$\alpha=g^TB^+AB^+g-2f^TB^+g$. Let
$$h(z)=\displaystyle{\sum_{i\in I}\Big(\hat{d}_iz_i^2
+2\Lambda_iz_i}\Big)+\alpha,\ I=\left\{1,2,\ldots,n\right\}.$$ 
Then, a feasible (QP1QC) violating the primal Slater condition can
be summarized as follows:
\begin{itemize}
\item[$\bullet$] If $\displaystyle{\min_{i\in I}\left\{\hat{d}_i\right\}<0}$ or
there is an index $i_0\in I$ such that $\hat{d}_{i_0}=0$ with
$\Lambda_{i_0}\neq0$, then (QP1QC) is unbounded from below.
\item[$\bullet$] Otherwise, $h(z)$ is convex,
$v(QP1QC)=\alpha-{\sum_{i\in J}\frac{\Lambda_i^2}{\hat{d}_i}}$
where $J=\{i\in I|\hat{d}_i>0\}$. Moreover, $x^*=B^+g+VQz$ with
\begin{eqnarray*} z_i =\left\{
\begin{array}{ll}
-\frac{\Lambda_i}{\hat{d}_i},&\mbox{ if } i\in J; \\
0,&\mbox{ if } i\in I\setminus J.
\end{array}
\right.
\end{eqnarray*}
is one of the optimal solutions to (QP1QC).

\end{itemize}

\section{New Results on Matrix Pencil and SDC}\label{matrix_pencil}
As we have mentioned before, we will use matrix pencils, that is,
one-parameter families of matrices of the form $A+\sigma B$ as a
main tool to study (QP1QC). An excellent survey for matrix pencils
can be found in \cite{Uhlig}. In this section, we first summarize
important results about matrix pencils followed by new findings of
the paper.

For any symmetric matrices $A,B$, define
\begin{eqnarray}
I_{\succ}(A,B)&:=&\{\sigma\in \mathbb{R}\mid ~ A+\sigma B \succ 0\},\\
I_{\succeq}(A,B)&:=&\{\sigma\in \mathbb{R}\mid ~ A+\sigma B \succeq 0\},\\
II_{\succ}(A,B)&:=&\{(\mu,\sigma)\in \mathbb{R}^2\mid ~ \mu A+\sigma B \succ 0\},\\
II_{\succeq}(A,B)&:=&\{(\mu,\sigma)\in \mathbb{R}^2\mid ~ \mu A+\sigma B \succeq 0\},\\
Q(A)&:=&\{v \mid~v^TAv=0\},\\
N(A)&:=&\{v \mid~ Av=0\}.
\end{eqnarray}

The first result characterizes when $I_{\succ}(A,B)\neq \emptyset$
and when $II_{\succ}(A,B) \neq \emptyset$.

\begin{thm}[\cite{Uhlig}]\label{thm:1}
If $A,B\in \mathbb{R}^{n\times n}$ are symmetric matrices, then
$I_{\succ}(A,B)\neq \emptyset$ if and only if
\begin{equation}
w^TAw>0,~\forall w\in Q(B),~ w\neq 0. \label{cond:1}
 \end{equation}
Furthermore, $II_{\succ}(A,B) \neq \emptyset$ implies that
\begin{equation}
 Q(A)\cap  Q(B)=\{0\}, \label{cond:2}
\end{equation}
which is also sufficient if $n\geq 3$.
\end{thm}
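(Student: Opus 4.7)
The plan is to handle the two claims separately. In each, the necessity direction is immediate from the definitions, and the sufficiency direction reduces, via a classical convexity theorem for the joint numerical range of two quadratic forms, to a two-dimensional separation argument.

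For the first claim, necessity is a one-line computation: if $A+\sigma_0 B\succ 0$ and $w\in Q(B)$ with $w\neq 0$, then $w^TAw = w^T(A+\sigma_0 B)w-\sigma_0(w^TBw)=w^T(A+\sigma_0 B)w>0$. For sufficiency, the key input I would use is Dines' theorem, which asserts that the joint range $W(A,B)=\{(w^TAw,\,w^TBw):w\in\mathbb{R}^n\}$ is a convex cone in $\mathbb{R}^2$. Condition (\ref{cond:1}) rephrases as: $W(A,B)$ is disjoint from the open half-ray $\{(\alpha,0):\alpha<0\}$. Since both sets are convex and meet only at the origin, they can be separated by a line through $0$, producing $(c_1,c_2)$ with $c_1\geq 0$ and $c_1\alpha+c_2\beta\geq 0$ on $W(A,B)$. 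In the generic case $c_1>0$, setting $\sigma=c_2/c_1$ gives $A+\sigma B\succeq 0$, and a small perturbation of $\sigma$ using compactness of the unit sphere together with the strict inequality on $Q(B)$ promotes this to $A+\sigma B\succ 0$. In the boundary case $c_1=0$ the matrix $B$ must be semidefinite; assuming WLOG $B\succeq 0$ one has $Q(B)=N(B)$, so (\ref{cond:1}) reduces to ``$A$ is positive definite on $N(B)$'', and a standard block decomposition with respect to $N(B)\oplus N(B)^{\perp}$ then yields $A+\sigma B\succ 0$ for all sufficiently large $\sigma$.

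For the second claim, necessity is again immediate: if $\mu A+\sigma B\succ 0$ and $w\in Q(A)\cap Q(B)$, then $w^T(\mu A+\sigma B)w=0$ forces $w=0$. For the converse under $n\geq 3$, the classical ingredient I would invoke is Brickman's convexity theorem, which states that in dimension at least three the spherical joint range $W_1(A,B)=\{(w^TAw,\,w^TBw):\|w\|=1\}$ is a compact convex subset of $\mathbb{R}^2$. The hypothesis $Q(A)\cap Q(B)=\{0\}$ says $(0,0)\notin W_1(A,B)$, so strictly separating the origin from this compact convex set yields $(\mu,\sigma)\neq (0,0)$ and $c>0$ with $\mu\,w^TAw+\sigma\,w^TBw\geq c$ for every unit vector $w$, which homogenizes to $\mu A+\sigma B\succ 0$. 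The dimensional restriction is essential: in dimension $n=2$, taking $A=\mathrm{diag}(1,-1)$ and $B=\left(\begin{smallmatrix}0 & 1\\ 1 & 0\end{smallmatrix}\right)$ gives $Q(A)\cap Q(B)=\{0\}$ yet $\mu A+\sigma B$ has eigenvalues $\pm\sqrt{\mu^2+\sigma^2}$ and is never positive definite; here $W_1(A,B)$ is a unit circle that surrounds the origin and is not convex, exactly the obstruction that Brickman's theorem removes once $n\geq 3$.

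The principal obstacle in both parts is the convexity of the joint range that underlies each separation argument -- Dines' theorem for the affine joint range in the first claim and Brickman's theorem for the spherical joint range (which forces the dimensional restriction) in the second. These are classical but genuinely non-trivial results; once they are in hand, the remainder of the proof is a routine planar separation followed by a short perturbation argument to upgrade $\succeq 0$ to $\succ 0$.
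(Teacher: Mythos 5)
The paper does not prove this theorem at all: it is quoted verbatim from Uhlig's survey \cite{Uhlig} as a known result (the first equivalence is essentially Finsler's theorem, the second the Calabi--Uhlig result on definite pencils), so there is no in-paper argument to compare against. Your proposal is a correct and standard route to both halves: the necessity directions are the one-line computations you give, and the sufficiency directions do follow from Dines' convexity of the cone $\{(w^TAw,w^TBw):w\in\mathbb{R}^n\}$ (valid for every $n$) and Brickman's convexity of the spherical range for $n\geq 3$, respectively, via planar separation; your strict separation in the second claim and the homogenization $\mu\,w^TAw+\sigma\,w^TBw\geq c\|w\|^2$ are exactly right, and the $2\times 2$ counterexample showing $n\geq 3$ is needed is the standard one and is correct.

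The only place that deserves more detail is the upgrade from $A+\sigma_0 B\succeq 0$ to $A+\sigma B\succ 0$ in the case $c_1>0$: ``perturb $\sigma$ slightly'' is not automatic, because one must know in which direction to perturb and that the perturbation cannot destroy definiteness elsewhere. The missing observation is that, under (\ref{cond:1}), every nonzero $u\in N(A+\sigma_0 B)$ has $u^TBu\neq 0$ (else $u^TAu=0$), and all such $u$ have $u^TBu$ of the \emph{same} sign: if $u^TBu>0>v^TBv$ for two null vectors, the quadratic $t\mapsto (u+tv)^TB(u+tv)$ vanishes at some $t^*$, producing a nonzero null vector in $Q(B)$ with zero $A$-form, contradicting (\ref{cond:1}). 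Once the sign is, say, positive, the compactness argument you allude to closes the gap: if $A+(\sigma_0+\varepsilon_k)B$ failed to be positive definite for $\varepsilon_k\downarrow 0$, unit vectors $w_k$ with $w_k^T(A+\sigma_0 B)w_k\leq -\varepsilon_k w_k^TBw_k$ would converge to a null vector $w$ of $A+\sigma_0 B$ with $w^TBw\leq 0$, a contradiction. With that lemma inserted (and noting that in the case $c_1=0$ the reduction ``WLOG $B\succeq 0$'' is harmless because $\sigma$ ranges over all of $\mathbb{R}$), your proof is complete.
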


The second result below extends the discussion in Theorem
\ref{thm:1} to the positive semidefinite case $I_{\succeq}(A,B)\neq
\emptyset$ assuming that $B$ is indefinite. This result was also
used by Mor$\acute{\rm{e}}$ to solve (QP1QC) in \cite{More}. An
example shown in \cite{More} indicates that the assumption about the
indefiniteness of $B$ is critical.

\begin{thm}[\cite{More}]
If $A,B\in R^{n\times n}$ are symmetric matrices and $B$ is indefinite,
then $I_{\succeq}(A,B)\neq \emptyset$
if and only if
\begin{equation}
  w^TAw\geq 0,~\forall w\in Q(B). \label{cond:3}
  \end{equation}
\end{thm}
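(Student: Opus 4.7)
The plan is to prove the two directions separately. The forward direction (only if) is immediate: if $\sigma_0 \in I_{\succeq}(A,B)$, then for any $w \in Q(B)$ we have $w^TBw = 0$, so $w^TAw = w^T(A+\sigma_0 B)w \geq 0$ by positive semidefiniteness. The substance of the theorem lies in the reverse direction, and the key idea I would use is a perturbation argument that bridges from Theorem 1 (the positive definite case) to the positive semidefinite case, using the indefiniteness of $B$ to avoid blow-up of the Lagrange multiplier.

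For the reverse direction, I would perturb $A$ by setting $A_\epsilon := A + \epsilon I$ for $\epsilon > 0$. Then for any nonzero $w \in Q(B)$, the hypothesis gives $w^T A_\epsilon w = w^T A w + \epsilon \|w\|^2 \geq \epsilon\|w\|^2 > 0$. By Theorem \ref{thm:1} applied to the pair $(A_\epsilon, B)$, condition (\ref{cond:1}) is satisfied, so there exists $\sigma_\epsilon \in \mathbb{R}$ such that $A_\epsilon + \sigma_\epsilon B \succ 0$.

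The heart of the argument is showing that the net $\{\sigma_\epsilon\}_{\epsilon>0}$ stays bounded as $\epsilon \to 0^+$; this is precisely where the indefiniteness of $B$ enters. Since $B$ is indefinite, pick $u, v \in \mathbb{R}^n$ with $u^TBu > 0$ and $v^TBv < 0$. If $\sigma_\epsilon \to +\infty$ along some sequence $\epsilon_k \to 0^+$, then
\begin{equation*}
v^T(A_{\epsilon_k} + \sigma_{\epsilon_k} B) v = v^TAv + \epsilon_k\|v\|^2 + \sigma_{\epsilon_k} v^TBv \to -\infty,
\end{equation*}
contradicting $A_{\epsilon_k} + \sigma_{\epsilon_k} B \succ 0$. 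An analogous contradiction with $u$ rules out $\sigma_\epsilon \to -\infty$. Hence $\{\sigma_\epsilon\}$ is bounded, and by Bolzano-Weierstrass there is a convergent subsequence $\sigma_{\epsilon_k} \to \sigma^*$. Passing to the limit in $A_{\epsilon_k} + \sigma_{\epsilon_k} B \succ 0$ yields $A + \sigma^* B \succeq 0$, so $\sigma^* \in I_{\succeq}(A,B)$, completing the proof.

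The main obstacle I anticipate is precisely the boundedness of $\sigma_\epsilon$, and the example the paper alludes to (where indefiniteness fails) confirms that this step genuinely uses the hypothesis: if $B$ were, say, positive semidefinite but singular, the multiplier $\sigma_\epsilon$ could be forced to $+\infty$ and the limiting matrix $A + \sigma^* B$ would not be available. Everything else is essentially a clean application of Theorem \ref{thm:1} to a perturbed pencil together with the standard closedness of the PSD cone.
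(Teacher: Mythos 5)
Your argument is correct. The forward direction is the trivial restriction of $A+\sigma_0 B\succeq 0$ to $Q(B)$, and your reverse direction is sound: for $A_\epsilon=A+\epsilon I$ the hypothesis gives $w^TA_\epsilon w>0$ for all nonzero $w\in Q(B)$, Theorem \ref{thm:1} then yields $\sigma_\epsilon$ with $A_\epsilon+\sigma_\epsilon B\succ 0$, the indefiniteness of $B$ (test vectors $u,v$ with $u^TBu>0>v^TBv$) forces $\{\sigma_\epsilon\}$ to stay bounded as $\epsilon\to 0^+$, and closedness of the positive semidefinite cone passes the limit to $A+\sigma^*B\succeq 0$. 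Note, however, that there is nothing in the paper to compare this against: the statement is quoted from Mor\'{e} \cite{More} and the authors give no proof of it, so your write-up is a self-contained derivation that the paper itself does not supply. What your route buys is economy within this paper's framework: everything reduces to Theorem \ref{thm:1} (the Finsler/Uhlig positive definite criterion, which the paper does state) plus an elementary compactness argument, and it makes explicit exactly where indefiniteness is needed, consistent with the paper's remark that Mor\'{e}'s example shows the assumption on $B$ is critical (for semidefinite $B$ the multipliers can escape to infinity, as you observe). One small presentational point: the Bolzano--Weierstrass step needs boundedness of $\sigma_\epsilon$ uniformly for small $\epsilon$, which your two-sided contradiction (ruling out subsequences tending to $+\infty$ and to $-\infty$) does establish since $\epsilon_k\|v\|^2\to 0$; it would read slightly cleaner to state the explicit bounds $-\frac{u^TAu+\|u\|^2}{u^TBu}\le\sigma_\epsilon\le\frac{v^TAv+\|v\|^2}{-v^TBv}$ valid for all $\epsilon\in(0,1]$, but this is cosmetic.
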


Other known miscellaneous results related to our discussion for
(QP1QC) are also mentioned below. In particular, the proof of Lemma
\ref{thm:lem:2} is constructive.

\begin{lem}\cite{More} \label{thm:lem:1}
Both $I_{\succ}(A,B)$ and $I_{\succeq}(A,B)$ are intervals.
\end{lem}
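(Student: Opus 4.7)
The plan is to establish that $I_{\succeq}(A,B)$ and $I_{\succ}(A,B)$ are convex subsets of $\mathbb{R}$, since every convex subset of $\mathbb{R}$ is an interval (possibly empty, a singleton, or unbounded). The crucial observation is that the map $\sigma \mapsto A + \sigma B$ is affine in $\sigma$, so for any $\sigma_1,\sigma_2 \in \mathbb{R}$ and any $t \in [0,1]$, we have the identity
\[
A + \bigl(t\sigma_1 + (1-t)\sigma_2\bigr)B \;=\; t\,(A + \sigma_1 B) + (1-t)\,(A + \sigma_2 B).
\]
This reduces the lemma to the well-known fact that the cones of positive semidefinite (respectively positive definite) matrices are convex.

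First I would handle $I_{\succeq}(A,B)$: given $\sigma_1,\sigma_2 \in I_{\succeq}(A,B)$ and $t \in [0,1]$, the matrix on the right-hand side of the displayed identity is a nonnegative combination of two positive semidefinite matrices, hence positive semidefinite; so $t\sigma_1 + (1-t)\sigma_2 \in I_{\succeq}(A,B)$. The argument for $I_{\succ}(A,B)$ is the same, using that a strict convex combination with $t \in (0,1)$ of two positive definite matrices is positive definite (and the endpoint cases $t=0$ and $t=1$ are trivially in the set). Since each set is convex in $\mathbb{R}$, each is an interval.

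There is essentially no obstacle here — the result is a direct consequence of affine parametrization combined with convexity of the PSD/PD cones. The only small point worth mentioning is that the intervals may be empty, unbounded, or degenerate to a point, but this is already accommodated by the usual definition of an interval. One could optionally remark that $I_{\succ}(A,B)$ is relatively open in $I_{\succeq}(A,B)$ (so it is an open interval when nonempty), but this refinement is not required by the statement.
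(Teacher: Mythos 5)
Your argument is correct: the affine dependence of $A+\sigma B$ on $\sigma$ together with convexity of the positive (semi)definite cone shows both sets are convex subsets of $\mathbb{R}$, hence intervals. The paper itself gives no proof of this lemma (it is quoted from Mor\'{e}'s paper), and your convexity argument is exactly the standard one underlying that reference, so nothing further is needed.
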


\begin{lem}\cite{Horn,JB-Urruty} \label{thm:lem:2}
$II_{\succ}(A,B)=\{(\lambda,\nu)\in R^2\mid  \lambda A+\nu B\succ
0\}\neq \emptyset$ implies that $A$ and $B$ are simultaneously
diagonalizable via congruence(SDC). When $n\ge3$,
$II_{\succ}(A,B)\not=\emptyset$ is also necessary for $A$ and $B$ to
be SDC.
\end{lem}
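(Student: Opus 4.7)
The plan is to tackle the two directions separately, using the positive definite element of the pencil to ``normalize'' and then appealing to planar convexity for the converse.

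For the forward direction, I would start from $(\lambda_0,\nu_0)\in II_{\succ}(A,B)$, so that $P:=\lambda_0 A+\nu_0 B\succ 0$. The first step is a Cholesky factorization $P=R^TR$ with $R$ invertible, followed by passing to the congruent pair $\tilde A:=R^{-T}AR^{-1}$, $\tilde B:=R^{-T}BR^{-1}$. These are symmetric and satisfy the single linear relation $\lambda_0\tilde A+\nu_0\tilde B=I_n$. If $\nu_0\neq 0$, I would orthogonally diagonalize $\tilde A$ by some $Q$; the linear relation then forces $Q^T\tilde BQ=\nu_0^{-1}(I-\lambda_0 Q^T\tilde AQ)$, which is automatically diagonal. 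If $\nu_0=0$ then $\tilde A$ is a positive scalar multiple of $I_n$, so any orthogonal $Q$ diagonalizing $\tilde B$ leaves $\tilde A$ diagonal as well. Taking $C:=R^{-1}Q$ makes $C^TAC$ and $C^TBC$ simultaneously diagonal, establishing SDC. This direction is almost entirely mechanical linear algebra.

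For the converse under $n\geq 3$, my plan is to route through Theorem \ref{thm:1}, which already identifies $II_{\succ}(A,B)\neq\emptyset$ with the condition $Q(A)\cap Q(B)=\{0\}$ in that dimensional regime. So it suffices to prove that SDC forces $Q(A)\cap Q(B)=\{0\}$. I would work in the simultaneous diagonalization basis, writing $C^TAC=\mathrm{diag}(a_i)$ and $C^TBC=\mathrm{diag}(b_i)$. A common isotropic vector $v=Cu\neq 0$ corresponds to nonnegative scalars $c_i=u_i^2$, not all zero, satisfying the planar identity $\sum_i c_i(a_i,b_i)=(0,0)$. Ruling this out is equivalent to exhibiting an open half-plane of $\mathbb{R}^2$ containing every $(a_i,b_i)$, whose inward normal $(\lambda,\nu)$ then produces $\lambda A+\nu B\succ 0$ via $\lambda\,\mathrm{diag}(a_i)+\nu\,\mathrm{diag}(b_i)=\mathrm{diag}(\lambda a_i+\nu b_i)\succ 0$.

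The main obstacle is precisely this two-dimensional separation step: under SDC together with $n\geq 3$, one must forbid configurations where the points $\{(a_i,b_i)\}_{i=1}^n\subset\mathbb{R}^2$ positively span the whole plane. I expect this is where the hypothesis $n\geq 3$ is genuinely used and where care is required to align the statement with the precise notion of SDC in play; once a common open half-plane is in hand, the remainder is a one-line computation. By contrast, the algebraic bookkeeping of the Cholesky-plus-orthogonal-diagonalization step, and the reduction of the converse to Theorem \ref{thm:1} via the diagonal representation, should be routine.
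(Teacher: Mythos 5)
Your forward direction is correct, and it is essentially the standard argument behind the citation: the paper gives no proof of Lemma \ref{thm:lem:2} (it is quoted from \cite{Horn,JB-Urruty}), and the Cholesky-plus-spectral-decomposition argument you describe is exactly the Horn--Johnson proof of the first assertion. (Minor point: when $\nu_0=0$ you get $\tilde A=\lambda_0^{-1}I$, which need not be a \emph{positive} multiple of $I$, but any scalar multiple of $I$ remains diagonal under the orthogonal change of basis, so nothing is affected.)

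The converse is where the proposal breaks down, and the ``main obstacle'' you flag is not a delicate step awaiting care --- it is false. SDC does not force $Q(A)\cap Q(B)=\{0\}$, even with $n\ge 3$ and even when both matrices are nonsingular: take $A=\mathrm{diag}(1,-1,1)$ and $B=\mathrm{diag}(1,1,-1)$. These are already diagonal, hence SDC, yet $v=(0,1,1)^T$ satisfies $v^TAv=v^TBv=0$, and $\lambda A+\nu B=\mathrm{diag}(\lambda+\nu,\;\nu-\lambda,\;\lambda-\nu)$ can never be positive definite; equivalently, the planar points $(1,1),(-1,1),(1,-1)$ admit no common open half-plane. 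So the separation step you plan to perform cannot be carried out, and your intermediate claim ``SDC $\Rightarrow Q(A)\cap Q(B)=\{0\}$'' is refuted. The same example shows that the necessity assertion in Lemma \ref{thm:lem:2}, read literally, fails and needs an extra hypothesis such as $Q(A)\cap Q(B)=\{0\}$; but under that hypothesis and $n\ge 3$, Theorem \ref{thm:1} already gives $II_{\succ}(A,B)\neq\emptyset$ with SDC playing no role, so your reduction would become vacuous rather than a genuine converse. Note that the paper itself only ever uses the forward implication of this lemma (in the proofs of Theorems \ref{thm:3} and \ref{thm:3-sufficient}), and the borderline behaviour of the ``necessity'' half is precisely what Corollary \ref{cor} and Example \ref{exam:1} are probing.
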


In the following we discuss necessary conditions for
$I_{\succeq}(A,B)\neq \emptyset$ (Theorem \ref{thm:3}) and
sufficient conditions for $II_{\succeq}(A,B)\neq \emptyset$ (Theorem
\ref{thm:3-sufficient}) without the indefiniteness assumption of
$B$.

\begin{thm}\label{thm:3}
Let $A,B\in R^{n\times n}$ be symmetric matrices and suppose
$I_{\succeq}(A,B)\neq \emptyset$. If $I_{\succeq}(A,B)$ is a
single-point set, then $B$ is indefinite and
$I_{\succ}(A,B)=\emptyset$. Otherwise, $I_{\succeq}(A,B)$ is an
interval and
\begin{itemize}
\item (a) $  Q(A)\cap  Q(B)=N(A)\cap N(B)$.
\item (b) $N(A+\sigma B)=N(A)\cap N(B)$, for any $\sigma$ in the interior of $I_{\succeq}(A,B)$.
\item (c) $A, B$ are simultaneously diagonalizable via congruence.
\end{itemize}
\end{thm}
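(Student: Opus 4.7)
The plan is to split into two cases based on whether $I_{\succeq}(A,B)$ is a singleton. For the singleton case $I_{\succeq}(A,B)=\{\sigma_0\}$: if $B\succeq 0$, then $A+(\sigma_0+t)B=(A+\sigma_0 B)+tB\succeq 0$ for every $t\geq 0$, so $[\sigma_0,\infty)\subseteq I_{\succeq}(A,B)$, contradicting the singleton; the mirror argument for $t\leq 0$ rules out $B\preceq 0$, so $B$ must be indefinite. For $I_{\succ}(A,B)=\emptyset$: if some $\tilde\sigma$ gave $A+\tilde\sigma B\succ 0$, continuity of the eigenvalues in $\sigma$ would force the same strict inequality on an open neighborhood of $\tilde\sigma$, hence an open subinterval inside $I_{\succeq}(A,B)$, again a contradiction.

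For the remaining case, Lemma \ref{thm:lem:1} gives an interval with nonempty interior, and I would prove (b) first because (a) and (c) both cascade from it. Fix $\sigma$ in the interior with $\sigma\pm\epsilon\in I_{\succeq}(A,B)$ for some small $\epsilon>0$. If $v\in N(A+\sigma B)$, then the two inequalities $v^T(A+(\sigma\pm\epsilon)B)v\geq 0$ combined with the equality $v^T(A+\sigma B)v=0$ squeeze $v^TBv=0$, and hence $v^TAv=0$. Using the standard fact that $M\succeq 0$ and $v^TMv=0$ imply $Mv=0$, applied to $A+(\sigma+\epsilon)B\succeq 0$, yields $(A+(\sigma+\epsilon)B)v=0$; subtracting $(A+\sigma B)v=0$ gives $Bv=0$, and then $Av=0$. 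The reverse inclusion $N(A)\cap N(B)\subseteq N(A+\sigma B)$ is immediate, completing (b). Part (a) then follows by choosing $\sigma$ interior: any $v\in Q(A)\cap Q(B)$ satisfies $v^T(A+\sigma B)v=0$, so PSD-ness forces $v\in N(A+\sigma B)=N(A)\cap N(B)$ by (b), while the opposite inclusion $N(A)\cap N(B)\subseteq Q(A)\cap Q(B)$ is trivial.

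For (c) I would use (b) to justify a block reduction. Set $W:=N(A)\cap N(B)$ and split $\mathbb{R}^n=W\oplus W^{\perp}$; because $Aw=Bw=0$ for every $w\in W$ and $A,B$ are symmetric, in a basis adapted to this splitting both matrices take the block form $\mathrm{diag}(0,\tilde A)$ and $\mathrm{diag}(0,\tilde B)$. By (b), $\tilde A+\sigma\tilde B$ is PSD with trivial kernel, i.e.\ positive definite, for any $\sigma$ in the interior of $I_{\succeq}(A,B)$; hence $(1,\sigma)\in II_{\succ}(\tilde A,\tilde B)$, and Lemma \ref{thm:lem:2} (whose forward direction holds unconditionally on dimension) supplies a nonsingular congruence diagonalizing $\tilde A$ and $\tilde B$ simultaneously. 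Extending by the identity on $W$ produces the desired SDC of $A$ and $B$.

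The main obstacle is step (c): its cleanliness rests entirely on the strong kernel identity (b), because without (b) one would only know that $A$ and $B$ vanish quadratically on $W$ rather than identically, which is too weak to extract a genuine block decomposition and to invoke Lemma \ref{thm:lem:2}. Getting (b) in turn hinges on the right two-sided squeeze afforded by $\sigma$ lying in the \emph{interior} of $I_{\succeq}(A,B)$, which is precisely why the singleton case has to be quarantined first.
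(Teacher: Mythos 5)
Your proof is correct and follows essentially the same route as the paper's: the singleton case via perturbing $\sigma$, parts (a)--(b) via the two-sided squeeze at an interior point combined with the fact that $M\succeq 0$ and $v^TMv=0$ imply $Mv=0$, and (c) via restricting to the complement of $N(A)\cap N(B)$ (justified by (b)) and invoking Lemma \ref{thm:lem:2}. The only differences are cosmetic: you prove (b) before (a) and establish indefiniteness of $B$ by excluding $B\succeq 0$ and $B\preceq 0$, whereas the paper exhibits vectors $v_1,v_2$ with $v_1^TBv_1>0$ and $v_2^TBv_2<0$ directly.
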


\begin{proof}
Suppose $I_{\succeq}(A,B)=\{\sigma\}$ is a single-point set. Then
for any $\sigma_1<\sigma<\sigma_2$, neither $A+\sigma_1 B$ nor
$A+\sigma_2 B$ is positive semidefinite. There are vectors $v_1$ and
$v_2$ such that
\begin{eqnarray*}
&&0>v_1^T(A+\sigma_1 B)v_1 = v_1^T(A+\sigma B)v_1+(\sigma_1-\sigma)v_1^TBv_1\geq (\sigma_1-\sigma)v_1^TBv_1; \\
&&0>v_2^T(A+\sigma_2 B)v_2 = v_2^T(A+\sigma B)v_2+(\sigma_2-\sigma)v_2^TBv_2\geq (\sigma_2-\sigma)v_2^TBv_2.
\end{eqnarray*}
It follows that
\[
v_1^TBv_1>0 ~\hbox{\rm and~}
 v_2^TBv_2<0,
\]
so $B$ is indefinite. Furthermore, suppose there is a $\hat\sigma\in
I_{\succ}(A,B)$ such that $A+\hat\sigma B\succ 0$. Then, $A+\sigma
B\succ0$ for $\sigma\in(\hat\sigma-\delta,\hat\sigma+\delta)$ and
some $\delta>0$ sufficiently small, which contradicts to the fact
that $I_{\succeq}(A,B)$ is a singleton.

Now assume that $I_{\succeq}(A,B)$ is neither empty nor a
single-point set. According to Lemma \ref{thm:lem:1},
$I_{\succeq}(A,B)$ is an interval. 
To prove (a), choose
$\sigma_1,\sigma_2\in I_{\succeq}(A,B)$ with $\sigma_1\neq\sigma_2$.
If $x\in Q(A)\cap Q(B)$, we have $x^T\left(A+\sigma_1B\right)x=0$.
Since $A+\sigma_1B\succeq0$, $x^T\left(A+\sigma_1B\right)x=0$
implies that $\left(A+\sigma_1B\right)x=0$.
%
%
Similarly, we have $\left(A+\sigma_2B\right)x=0$. Therefore,
$$0=\left(A+\sigma_1B\right)x-\left(A+\sigma_2B\right)x
=\left(\sigma_1-\sigma_2\right)Bx,$$ indicating that $Bx=0$. It follows
also that $Ax=0$. In other words, $Q(A)\cap Q(B)\subseteq N(A)\cap
N(B)$. Since it is trivial to see that $N(A)\cap N(B)\subseteq
Q(A)\cap Q(B)$, we have the statement (a).

To prove (b), choose $\sigma_1, \sigma, \sigma_2\in
int\left(I_{\succeq}\left(A,B\right)\right)$ such that
$\sigma_1<\sigma<\sigma_2.$ Then, for all $x\in R^n$,
$$\begin{array}{c}
0\leq x^T\left(A+\sigma_1B\right)x
=x^T\left(A+\sigma B\right)x+\left(\sigma_1-\sigma\right)x^TBx,\\
0\leq x^T\left(A+\sigma_2B\right)x
=x^T\left(A+\sigma B\right)x+\left(\sigma_2-\sigma\right)x^TBx,
\end{array}$$
which implies that
$$0\leq x^TBx\leq0,$$
and hence $x^TBx=0.$ Moreover, if $x\in N\left(A+\sigma B\right)$,
we have not only $\left(A+\sigma B\right)x=0,$ but
$x^T\left(A+\sigma B\right)x=0$. Since $x^TBx=0$, there also is
$x^TAx=0$. By the statement (a), we have proved that
$$N\left(A+\sigma B\right)\subseteq Q(A)\cap Q(B)=N(A)\cap N(B).$$
With the fact that $N(A)\cap N(B)\subseteq N\left(A+\sigma B\right)$
for $\sigma\in int\left(I_{\succeq}\left(A,B\right)\right)$, we find
that $N\left(A+\sigma B\right)$ is invariant for any $\sigma\in
int\left(I_{\succeq}\left(A,B\right)\right)$, and the statement (b)
follows.

For the statement (c), let $V\in R^{n\times r}$ be the basis matrix
of $N(A+\sigma B)$ for some $\sigma\in
int\left(I_{\succeq}\left(A,B\right)\right)$, where $r={\rm
dim}(N(A+\sigma B))$. 
Extend $V$ to a nonsingular matrix $[U ~ V]\in R^{n\times n}$ such
that $U^TU$ is nonsingular and $U^TV=0$. Then,
\[
\left[\begin{array}{c}U^T\\V^T\end{array}\right](A+\sigma B)\left[U~V\right]
=\left[\begin{array}{cc}U^T(A+\sigma B)U& U^T(A+\sigma B)V\\V^T(A+\sigma B)U&V^T(A+\sigma B)V\end{array}\right]
=\left[\begin{array}{cc}U^TAU+\sigma U^T BU& 0\\0&0\end{array}\right].
\]
Consequently,
\begin{equation}\label{UTAU}
U^TAU+\sigma U^T BU\succ 0,
\end{equation}
which can be verified by applying $u^T$ and $u$, $u\in R^{n-r}$, to
$U^TAU+\sigma U^T BU$. Then, $(u^TU^T)(A+\sigma B)(Uu)=0$ if and
only if $(A+\sigma B)(Uu)=0$. Since $U$ is the orthogonal complement
of $V$, it must be $u=0$.

By Lemma \ref{thm:lem:2}, $U^TAU$ and $U^TBU$ are simultaneously
diagonalizable via congruence, i.e., there exists nonsingular square
matrix $U_1$ such that both $U_1^TU^TAUU_1$ and $U_1^TU^TBUU_1$ are
diagonal. Let $W$ be such that
\[\left[\begin{array}{cc}U_1&0\\0&W\end{array}\right]
\]
is nonsingular. By the statement (b), $V$ also spans $N(A)\cap N(B)$
and thus $AV=BV=0$. It indicates that
\begin{eqnarray*}
\left[\begin{array}{cc}U_1^T&0\\0&W^T\end{array}\right]\left[\begin{array}{c}U^T\\V^T\end{array}\right]A\left[\begin{array}{cc}U&V\end{array}\right]
\left[\begin{array}{cc}U_1&0\\0&W\end{array}\right]&=&\left[\begin{array}{cc}U_1^TU^TAUU_1&0\\0&0\end{array}\right],\\
\left[\begin{array}{cc}U_1^T&0\\0&W^T\end{array}\right]\left[\begin{array}{c}U^T\\V^T\end{array}\right]B\left[\begin{array}{cc}U&V\end{array}\right]
\left[\begin{array}{cc}U_1&0\\0&W\end{array}\right]&=&\left[\begin{array}{cc}U_1^TU^TBUU_1&0\\0&0\end{array}\right].
\end{eqnarray*}
That is, $A$ and $B$ are simultaneously diagonalizable via
congruence. The statement (c) is proved.
\end{proof}

\begin{thm}\label{thm:3-sufficient}
Let $A,B\in R^{n\times n}$ be symmetric matrices. Suppose $ Q(A)\cap
Q(B)=N(A)\cap N(B)$ and
\begin{equation}
{\rm dim}\left\{N(A)\cap N(B)\right\}\leq n-3.\label{thm:eq:0}
\end{equation}
Then,
\begin{itemize}
\item (a') $II_{\succ}(U^TAU,U^TBU)\neq \emptyset$ where $U$ is the basis
matrix spans the orthogonal complement of $N(A)\cap N(B)$;
\item (b') $II_{\succeq}(A,B)\neq \emptyset$; and
\item (c') $A, B$ are simultaneously diagonalizable via congruence.
\end{itemize}
\end{thm}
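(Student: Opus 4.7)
The plan is to mimic the block-reduction argument used in the proof of Theorem \ref{thm:3}(c), but run it in the opposite direction: instead of starting from a point of $I_{\succeq}(A,B)$ and extracting a definite block, we will build a pair $(\mu,\sigma)$ that makes $\mu A+\sigma B$ positive semidefinite by first finding it on the nontrivial block and then lifting back. Concretely, let $V\in\mathbb{R}^{n\times r}$ be a basis matrix for $N(A)\cap N(B)$ and let $U\in\mathbb{R}^{n\times(n-r)}$ span its orthogonal complement, so that $[U~V]$ is nonsingular, $U^TV=0$, and $AV=BV=0$. The hypothesis $\dim(N(A)\cap N(B))\le n-3$ gives $n-r\ge 3$, which is exactly what we need to invoke the sufficient direction of Theorem \ref{thm:1}.

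For (a'), I would apply Theorem \ref{thm:1} to the pair $\widehat A:=U^TAU$, $\widehat B:=U^TBU$ living on $\mathbb{R}^{n-r}$. It suffices to check $Q(\widehat A)\cap Q(\widehat B)=\{0\}$. If $u\in\mathbb{R}^{n-r}$ satisfies $u^T\widehat A u=u^T\widehat B u=0$, then $Uu\in Q(A)\cap Q(B)=N(A)\cap N(B)$ by the first hypothesis; but $Uu$ also lies in the column space of $U$, the orthogonal complement of $N(A)\cap N(B)$, so $Uu=0$ and then $u=0$ since $U$ has full column rank. Because $n-r\ge 3$, Theorem \ref{thm:1} then yields $II_{\succ}(\widehat A,\widehat B)\neq\emptyset$.

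For (b'), pick any $(\mu,\sigma)\in II_{\succ}(\widehat A,\widehat B)$ from (a'). Using $AV=BV=0$ and $U^TV=0$, the congruence
\[
[U~V]^T(\mu A+\sigma B)[U~V]
=\begin{bmatrix}\mu\widehat A+\sigma\widehat B & 0\\ 0 & 0\end{bmatrix}\succeq 0,
\]
and since congruence by a nonsingular matrix preserves positive semidefiniteness, $\mu A+\sigma B\succeq 0$, so $(\mu,\sigma)\in II_{\succeq}(A,B)$. For (c'), Lemma \ref{thm:lem:2} applied to $(\widehat A,\widehat B)$ (again using $n-r\ge 3$ together with (a')) produces a nonsingular $U_1$ with $U_1^T\widehat AU_1$ and $U_1^T\widehat BU_1$ diagonal; extending the congruence by any nonsingular $W$ on the $V$-block, exactly as in the last display of the proof of Theorem \ref{thm:3}, simultaneously diagonalizes $A$ and $B$.

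The only real obstacle is the verification in (a') that passing from the full pair $(A,B)$ to the compressed pair $(\widehat A,\widehat B)$ does not create spurious common isotropic vectors; this is where the hypothesis $Q(A)\cap Q(B)=N(A)\cap N(B)$ is essential, since without it a vector $u$ with $Uu\in Q(A)\cap Q(B)$ need not vanish. Everything else is routine bookkeeping on the $2\times 2$ block decomposition along $N(A)\cap N(B)$ and its orthogonal complement, together with a direct appeal to Theorem \ref{thm:1} and Lemma \ref{thm:lem:2}, whose applicability is guaranteed by the dimension bound $n-r\ge 3$.
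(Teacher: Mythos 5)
Your proposal is correct and follows essentially the same route as the paper: block-decompose along $N(A)\cap N(B)$ and its orthogonal complement, use $Q(A)\cap Q(B)=N(A)\cap N(B)$ to show the compressed pair $(U^TAU,U^TBU)$ has trivial common isotropic vectors, invoke Theorem \ref{thm:1} with $n-r\ge 3$ for (a'), lift back by congruence for (b'), and apply Lemma \ref{thm:lem:2} plus the block extension from Theorem \ref{thm:3}(c) for (c'). The only cosmetic difference is that you argue $Uu=0$ via orthogonality of the column spaces while the paper writes $Ux=Vz$ and premultiplies by $(U^TU)^{-1}U^T$; these are the same argument.
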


\begin{proof}
Following the same notation in the proof of Theorem \ref{thm:3}, we
let $V\in R^{n\times r}$ be the basis matrix of $N(A)\cap N(B)$,
$r={\rm dim}(N(A)\cap N(B))$, and then extend $V$ to a nonsingular
matrix $[U ~ V]\in R^{n\times n}$ such that $U^TU$ is nonsingular
and $U^TV=0$. For any $x\in Q(U^TAU)\cap Q(U^TBU)$, we have $
x^TU^TAUx=x^TU^TBUx=0$ and thus $Ux\in Q(A)\cap Q(B).$ By
assumption, $ Q(A)\cap Q(B)=N(A)\cap N(B)$, there must exist $z\in
R^r$ such that
\[
Ux=Vz.
\]
Therefore,
\[
x=(U^TU)^{-1}U^TVz=0,
\]
which shows that
\begin{equation}\label{thm:eq:5}
 Q(U^TAU)\cap Q(U^TBU)=\{0\}.
\end{equation}
It follows from Assumption (\ref{thm:eq:0}) that
$${\rm dim}(U^TAU)={\rm dim}(U^TBU)=n-r\ge3.$$
According to Theorem \ref{thm:1}, (\ref{thm:eq:5}) implies that
\begin{equation}
II_{\succ}(U^TAU,U^TBU)\neq \emptyset. \label{thm:eq:6}
\end{equation}
Namely, there exists $(\mu,\sigma)\in \mathbb{R}^2$ such that
$\mu(U^TAU)+\sigma(U^TBU)\succ0$, which proved the statement (a').
Now, for $y\in R^n$, we can write $y=Uu+Vv$, where $u\in R^{n-r},
v\in R^{r}$. Then,
\begin{eqnarray*}
&&y^T(\mu A +\sigma B)y\\
&=&(u^TU^T+v^TV^T)(\mu A +\sigma B)(Uu+Vv)\\
&=&u^T(\mu U^TAU+\sigma U^TBU)u\ge0,
\end{eqnarray*}
which proves $II_{\succeq}(A,B)\neq \emptyset.$ From
(\ref{thm:eq:6}) and Lemma \ref{thm:lem:2}, we again conclude that
$U^TAU$ and $U^TBU$ are simultaneously diagonalizable via
congruence. It follows from the same proof for the statement (c) in
Theorem \ref{thm:3} that $A, B$ are indeed simultaneously
diagonalizable via congruence. The proof is complete.
\end{proof}

\begin{rem}
When $Q(A)\cap Q(B)=\{0\}$, the assumption of Theorem
\ref{thm:3-sufficient} becomes $N(A)\cap N(B)=\{0\}$. In this case,
the dimension condition (\ref{thm:eq:0}) is equivalent to $n\geq 3$
and the basis matrix $U$ is indeed $I_n$, the identical matrix of
dimension $n$. From the statement (a') in Theorem
\ref{thm:3-sufficient}, we conclude that $II_{\succ}(A,B)\neq
\emptyset$. In other words, we have generalized the part of
conclusion in Theorem \ref{thm:1} that ``$Q(A)\cap Q(B)=\{0\}$ and
$n\ge3$'' implies ``$II_{\succ}(A,B)\neq \emptyset$''.
\end{rem}

As an immediate corollary of Theorem \ref{thm:3-sufficient}, we
obtain sufficient conditions to make $A$ and $B$ simultaneously
diagonalizable via congruence.
\begin{cor}\label{cor}
Let $A,B\in R^{n\times n}$ be symmetric matrices. Suppose one of the
following conditions is satisfied,
\begin{itemize}
\item {{\rm (S1)}} $I_{\succeq}(A,B)$ contains more than one point;
\item {{\rm (S2)}} $I_{\succeq}(B,A)$ contains more than one point;
\item {{\rm (S3)}} $ Q(A)\cap  Q(B)=N(A)\cap N(B)$, with ${\rm dim}\left\{N(A)\cap N(B)\right\}\neq n-2$,
\end{itemize}
then $A, B$ are simultaneously diagonalizable via congruence.
\end{cor}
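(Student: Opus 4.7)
My plan is to dispatch each of (S1), (S2), (S3) separately, since all three lead to SDC but via different earlier results. The easy cases are (S1) and (S2), which reduce directly to Theorem \ref{thm:3}(c); the real work will be (S3), which requires matching up its dimension hypothesis with the slightly stronger dimension hypothesis of Theorem \ref{thm:3-sufficient}.

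For (S1), the assumption that $I_{\succeq}(A,B)$ contains more than one point is precisely the ``otherwise'' clause of Theorem \ref{thm:3}, so statement (c) of that theorem delivers the SDC conclusion immediately. For (S2), the same reasoning applies to the pencil $B+\sigma A$ in place of $A+\sigma B$, yielding that $B$ and $A$ are SDC; since simultaneous diagonalizability by congruence is a symmetric property of the pair (a single nonsingular $C$ diagonalizes $C^TAC$ and $C^TBC$ at once), this is exactly SDC of $A$ and $B$.

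The case (S3) I would split on the value of $r:=\dim\{N(A)\cap N(B)\}$. When $r\le n-3$, the hypotheses of Theorem \ref{thm:3-sufficient} hold verbatim, so its statement (c') gives SDC. The remaining values allowed by (S3) are $r=n-1$ and $r=n$. If $r=n$ then $A=B=0$ and every basis diagonalizes both; if $r=n-1$ I would mimic the construction inside Theorem \ref{thm:3-sufficient}, picking $V\in\mathbb{R}^{n\times(n-1)}$ with orthonormal columns spanning $N(A)\cap N(B)$ and a unit vector $U$ spanning its orthogonal complement. Now $U^TAU$ and $U^TBU$ are scalars, hence trivially ``diagonal'', and using $AV=BV=0$ (from the statement of (S3)) together with symmetry one gets
\[
[U\ V]^TA[U\ V]=\begin{pmatrix}U^TAU&0\\0&0\end{pmatrix},\qquad
[U\ V]^TB[U\ V]=\begin{pmatrix}U^TBU&0\\0&0\end{pmatrix},
\]
which exhibits a single congruence simultaneously diagonalizing $A$ and $B$.

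The main obstacle here is not technical but conceptual: one has to understand why (S3) explicitly excludes $r=n-2$. That excluded dimension corresponds to the situation where the reduced matrices $U^TAU,U^TBU$ are $2\times 2$, and Theorem \ref{thm:1} requires $n\ge 3$ to pass from $Q(\cdot)\cap Q(\cdot)=\{0\}$ to $II_{\succ}\neq\emptyset$. Thus the bridge used in Theorem \ref{thm:3-sufficient} genuinely fails in that case and, since no separate argument is available, it is simply hypothesised away in the corollary's statement.
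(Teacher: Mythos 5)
Your proposal is correct and follows essentially the same route as the paper: (S1)/(S2) are dispatched via statement (c) of Theorem \ref{thm:3} (with the symmetry of SDC for (S2)), and (S3) via Theorem \ref{thm:3-sufficient} when ${\rm dim}\{N(A)\cap N(B)\}\leq n-3$, with the residual dimensions $n$ and $n-1$ handled by hand. The only (harmless) divergence is the $n-1$ subcase, where the paper notes that $A=\alpha uu^T$ and $B=\beta uu^T$ are proportional (taking $\beta\neq0$), whereas you exhibit the diagonalizing congruence $[U~V]$ directly from $AV=BV=0$; both arguments are valid, and yours treats the cases $\alpha\neq0$ and $\beta\neq0$ uniformly.
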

\begin{proof} Conditions (S1) and (S2) directly implies that $I_{\succeq}(A,B)$
(and $I_{\succeq}(B,A)$ respectively) is non-empty and is not a
single point set. The conclusion follows from the statement (c) of
Theorem \ref{thm:3}. Condition (S3) is a slight generalization of
Theorem \ref{thm:3-sufficient} where the situation for ${\rm
dim}\left\{N(A)\cap N(B)\right\}\leq n-3$ has already been proved.
It remains to show that $A, B$ are simultaneously diagonalizable via
congruence when ${\rm dim}\left\{N(A)\cap N(B)\right\}=n\vee (n-1).$
Notice that ${\rm dim}\left\{N(A)\cap N(B)\right\}=n$ implies
$A=B=0$ (in which case the conclusion follows immediately), whereas
${\rm dim}\left\{N(A)\cap N(B)\right\}=n-1$ implies that matrices
$A$ and $B$ must be expressed as $A=\alpha uu^T,~B=\beta uu^T$ for
some vector $u\in R^n$ and $\alpha^2+\beta^2\not=0$. Suppose
$\beta\not=0$, then $A=\frac{\alpha}{\beta}B$ so the two matrices
are certainly simultaneously diagonalizable via congruence.
\end{proof}

Example \ref{exam:1} below shows that the non-single-point
assumption of $I_{\succeq}(A,B)$ is necessary to assure statement
(a) and (c); and the dimensionality assumption in (S3) is also
necessary.

\begin{exam}\label{exam:1}
Consider
\begin{equation}
A=\left[\begin{array}{cc}1&0\\0&0\end{array}\right],~B=\left[\begin{array}{cc}0&1\\1&0\end{array}\right].
\end{equation}
Then there is only one $\sigma=0$ such that $A+\sigma B\succeq 0$, but
\begin{eqnarray*}
 Q(A)\cap  Q(B)&=&\left\{\left[\begin{array}{c}0\\  \ast \end{array}\right]\right\};\\
N(A)\cap N(B)&=&\left\{0\right\};\\
{\rm dim}\left\{N(A)\cap N(B)\right\}&=&n-2.
\end{eqnarray*}
It is not difficult to verify that here $A$ and $B$ cannot be
simultaneously diagonalizable via congruence.
\end{exam}

\begin{rem}
Corollary \ref{cor} extends Lemma \ref{thm:lem:2} since $
\{(\lambda,\nu)\in R^2\mid  \lambda A+\nu B\succ 0\}\neq \emptyset $
implies that either $I_{\succ}(A,B)\neq \emptyset$ or
$I_{\succ}(B,A)\neq \emptyset$, belonging to (S1) and (S2),
respectively.
\end{rem}
\begin{rem}
Both (S1) and (S2) are polynomially checkable. In literature, there
are necessary and sufficient conditions for $A$ and $B$ to be
simultaneously diagonalizable via congruence, see \cite{Bec} and
references therein. But none is polynomially checkable.
\end{rem}

%

\section{Solving (QP1QC) via Matrix Pencils}\label{chapter_PSC}

%

In this section, we use matrix pencils developed in the previous
section to establish necessary and sufficient conditions for the
``undesired'' (QP1QC) which are unbounded below, and which are
bounded below but unattainable. All the discussions here are subject
to the assumption of the primal Slater condition since the other
type of (QP1QC) have been otherwise solved in Section 2 above.
Interestingly, probing into the two undesired cases eventually lead
to a complete characterization for the solvability of (QP1QC).

\begin{thm} \label{thm:alg:3} Under the primal Slater condition,
(QP1QC) is unbounded below if and only
if the system of $\sigma$:
\begin{equation}\label{sdp:4}
\left\{
\begin{array}{l}
A+\sigma B \succeq 0,\\
\sigma\geq 0,\\
f+\sigma  g\in \mathcal{R}(A+\sigma B),
\end{array}\right.
\end{equation}
has no solution.
\end{thm}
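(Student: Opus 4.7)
The plan is to rewrite the theorem in its contrapositive-equivalent form: \emph{under the primal Slater condition, (QP1QC) is bounded below if and only if the system \eqref{sdp:4} is solvable.} I will prove the two directions using only elementary facts about unconstrained quadratic minimization together with the weak/strong duality relations already established in \eqref{wd} and \eqref{equation3}.

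The key preliminary observation is the following characterization of finiteness of the Lagrangian infimum: for a fixed $\sigma\ge 0$,
\[
d(\sigma)=\inf_{x\in\mathbb{R}^n}\bigl[x^T(A+\sigma B)x-2(f+\sigma g)^Tx-\mu\sigma\bigr]>-\infty
\]
if and only if $A+\sigma B\succeq 0$ and $f+\sigma g\in\mathcal{R}(A+\sigma B)$, in which case $d(\sigma)=-\mu\sigma-(f+\sigma g)^T(A+\sigma B)^{+}(f+\sigma g)$. This is the familiar fact that a quadratic in $x$ is bounded below iff its Hessian is positive semidefinite and the linear part is in its range. I will invoke it without much fanfare.

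For the easy direction (system solvable $\Rightarrow$ (QP1QC) bounded below), suppose $\sigma^{*}$ satisfies all three conditions in \eqref{sdp:4}. By the preliminary observation, $d(\sigma^{*})$ is finite. Weak duality \eqref{wd} then gives
\[
v(\mathrm{QP1QC})\ge v(\mathrm{D_{SDP}})\ge d(\sigma^{*})>-\infty,
\]
so (QP1QC) is bounded below. This direction does not even require the primal Slater condition.

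For the harder direction (bounded below $\Rightarrow$ system solvable), here is where the Slater hypothesis is used, through the strong duality identity \eqref{equation3}. If (QP1QC) is bounded below, then $v(\mathrm{D_{SDP}})=v(\mathrm{QP1QC})>-\infty$. Because the supremum over the empty set is $-\infty$, the feasible set of $(\mathrm{D_{SDP}})$ must be non-empty; pick any feasible pair $(s,\sigma)$. The two diagonal blocks of a positive semidefinite matrix are themselves positive semidefinite, so $\sigma\ge 0$ (trivially, by the explicit constraint) and $A+\sigma B\succeq 0$. Moreover, a symmetric block matrix $\begin{bmatrix}P&q\\q^T&r\end{bmatrix}\succeq 0$ forces $q\in\mathcal{R}(P)$; applied to $P=A+\sigma B$ and $q=-(f+\sigma g)$ this yields $f+\sigma g\in\mathcal{R}(A+\sigma B)$. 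All three conditions of \eqref{sdp:4} are thus met, completing the proof.

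The only step requiring care is the Schur-type range condition extracted from the positive semidefinite block matrix in the last paragraph; the rest is essentially bookkeeping on weak and strong duality, so I do not anticipate any serious obstacle.
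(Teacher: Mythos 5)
Your proposal is correct, and its skeleton is the same as the paper's: in both, the system (\ref{sdp:4}) is identified with feasibility of $({\rm D_{\rm SDP}})$ in (\ref{Dsdp}), the easy direction follows from weak duality, and the hard direction from strong duality under the primal Slater condition. The difference is in which strong-duality fact is invoked and how much of it is needed. The paper cites conic SDP duality (Helmberg) to get $v({\rm P_{\rm SDP}})=v({\rm D_{\rm SDP}})$ \emph{with attainment} of the dual optimum, and then reads (\ref{sdp:4}) off the attained optimal $(\sigma,s)$; you instead use the S-lemma identity (\ref{equation3}) already established in the introduction, note that finiteness of $v({\rm D_{\rm SDP}})$ alone forces the feasible set of $({\rm D_{\rm SDP}})$ to be non-empty, and extract the three conditions of (\ref{sdp:4}) from an arbitrary feasible pair via the standard fact that $\left[\begin{array}{cc}P&q\\q^T&r\end{array}\right]\succeq 0$ implies $q\in\mathcal{R}(P)$. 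This buys a slightly more self-contained argument (no attainment result is needed, only what the paper has already proved), and it also makes explicit the Schur-type range step that the paper's two-line proof leaves implicit --- namely why feasibility of the block LMI is equivalent to $A+\sigma B\succeq 0$, $\sigma\ge 0$ and $f+\sigma g\in\mathcal{R}(A+\sigma B)$, which is exactly the content of your preliminary observation on bounded-below quadratics. Conversely, the paper's appeal to dual attainment is not wasted elsewhere: it is the version that later discussions (e.g.\ Theorem \ref{thm:alg:4} and the solution procedure) lean on, so your route is a clean substitute for this theorem but not a replacement for the attainment statement used downstream.
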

\begin{proof}
Since the primal Slater condition is satisfied, if the infimum of
(QP1QC) is finite, by the strong duality \cite{Helmberg02}, $v({\rm
P_{SDP}})=v({\rm D_{SDP}})$ and the optimal value is attained for
(${\rm D_{SDP}}$). Then, the system (\ref{sdp:4}) has a solution.
Conversely, suppose the system (\ref{sdp:4}) has a solution. Then
(${\rm D_{SDP}}$) is feasible which provides a finite lower bound
for (${\rm P_{SDP}}$). As the primal Slater condition is satisfied,
(QP1QC) is feasible and must be bounded from below.
\end{proof}


\begin{rem} Checking whether the system (\ref{sdp:4}) has a solution can be done
in polynomial time. To this end, denote
\begin{eqnarray}
\sigma_l^*:=\min_{A+\sigma B\succeq 0} \sigma, \label{sdp:p1}\\
 \sigma_u^*:=\max_{A+\sigma B\succeq 0} \sigma, \label{sdp:p2}
\end{eqnarray}
and let $V\in R^{n\times r}$ be the basis matrix of $N(A)\cap N(B)$
and $U\in R^{n\times (n-r)}$ satisfy $[U~V]$ is nonsingular and
$U^TV=0$. From
\[
\left[\begin{array}{c}U^T\\V^T\end{array}\right](A+\sigma B)\left[U~V\right]
=\left[\begin{array}{cc}U^T(A+\sigma B)U& U^T(A+\sigma B)V\\V^T(A+\sigma B)
U&V^T(A+\sigma B)V\end{array}\right]
=\left[\begin{array}{cc}U^TAU+\sigma U^T BU& 0\\0&0\end{array}\right],
\]
it can be seen that, if $U^T BU=0$ and $U^TAU\not\succeq 0$, then
$I_{\succeq}(A,B)=\emptyset$ and the system (\ref{sdp:4}) has no
solution. It follows that (QP1QC) must be unbounded below in this
case.

Assume that $I_{\succeq}(A,B)\not=\emptyset$ and $\bar\sigma\in
I_{\succeq}(A,B)$. Then, we observe from
$$A+\sigma B=(A+\bar\sigma B)+(\sigma - \bar\sigma)B$$
that $\sigma_l^*=-\infty$ if and only if $B\preceq 0$. Similarly,
$\sigma_u^*=\infty$ if and only if $B\succeq0$.

Now suppose $-\infty<\sigma_l^*\le\sigma_u^*<\infty$. From the proof
for the statement (c) in Theorem \ref{thm:3} and (\ref{UTAU}), if
$\bar\sigma\in int\left(I_{\succeq}\left(A,B\right)\right)$, then
$U^TAU+\bar\sigma U^T BU\succ 0$. Conversely, if $U^TAU+\bar\sigma
U^T BU\succ 0$, there must be some $\epsilon>0$ such that
$U^TAU+(\bar\sigma\pm\epsilon) U^T BU\succ 0$ and thus
$A+(\bar\sigma\pm\epsilon)B\succeq 0$. It holds that $\bar\sigma\in
int\left(I_{\succeq}\left(A,B\right)\right)$. Consequently,
$\sigma_l^*$ and $\sigma_u^*$, as end points of
$I_{\succeq}\left(A,B\right)$, must be roots of the polynomial
equation
\begin{equation}
{\rm det}(U^TAU+\sigma U^T BU)=0.\label{poly}
\end{equation}
We first find all the roots of the polynomial equation (\ref{poly}),
denoted by $r_1,\ldots,r_{n-r}$, which can be done in polynomial
time. Then,
\begin{eqnarray*}
&&\sigma_l^*=\min\{r_i:~U^TAU+r_i U^T BU\succeq 0, ~i=1,\ldots,n-r\},\\
&&\sigma_u^*=\max\{r_i:~U^TAU+r_i U^T BU\succeq 0,~ i=1,\ldots,n-r\}.
\end{eqnarray*}

\begin{itemize}
\item {Case (a)} ~ If $\sigma_u^*<0$, (\ref{sdp:4}) has no solution.
\item {Case (b)} ~ $\sigma_u^*=0$ or $\sigma_l^*=\sigma_u^*>0$. Checking the feasibility of
(\ref{sdp:4}) is equivalent to verify whether the linear equation
\[
(A+\sigma B)x=f+\sigma g
\]
has a solution for $\sigma=0$ or $\sigma=\sigma_l^*=\sigma_u^*$.
\item {Case (c)} ~ $\sigma_u^*>0$ and $\sigma_l^*<\sigma_u^*$. According to
Theorem \ref{thm:3}, the basis matrix $V$ spans $N(A+\sigma'
B)=N(A)\cap N(B)$ for any $\sigma'\in (\sigma_l^*,\sigma_u^*).$
\begin{itemize}
\item {Subcase (c1)} ~$\sigma_l^*< 0$.  (\ref{sdp:4}) has a solution if and only if either
there is a $\sigma\in [0,\sigma_u^*)$ such that
\begin{equation}\label{sigma:eq1}
V^Tf+\sigma V^Tg=0,
\end{equation}
or the linear equation
\[
(A+\sigma_u^* B)x=f+\sigma_u^* g
\]
has a solution.
\item {Subcase (c2)} ~$\sigma_l^*\geq  0$.  (\ref{sdp:4}) has a solution if and only if
one of the following three conditions holds:
\begin{eqnarray}
&&V^Tf+\sigma V^Tg=0,~{\rm for ~some } ~\sigma\in (\sigma_l^*,\sigma_u^*);\label{sigma:eq2}\\
&&(A+\sigma_l^* B)x=f+\sigma_l^* g ~{\rm has ~a ~solution};\nonumber\\
&&(A+\sigma_u^* B)x=f+\sigma_u^* g ~{\rm has ~a ~solution}.\nonumber
\end{eqnarray}
\end{itemize}
\end{itemize}
We show either (\ref{sigma:eq1}) or (\ref{sigma:eq2}) is
polynomially solvable. Since $V,f,g$ are fixed, if $V^Tf=V^Tg=0$,
$V^Tf+\sigma V^Tg=0$ holds for any $\sigma$. If $V^Tf$ and $V^Tg\neq
0$ are linear dependent, there is a unique $\sigma$ such that
$V^Tf+\sigma V^Tg=0$. Otherwise, $V^Tf+\sigma V^Tg=0$ has no
solution.
\end{rem}

From Theorem \ref{thm:alg:3}, it is clear that, if (QP1QC) is
bounded from below, the positive semidefinite matrix pencil
$I_{\succeq}(A,B)$ must be non-empty. In solving (QP1QC), we
therefore divide into two cases: $I_{\succeq}(A,B)$ is an interval
or $I_{\succeq}(A,B)$ is a singleton.

\begin{thm} \label{thm:alg:4-interval}
Under the primal Slater condition, if the optimal value of (QP1QC)
is finite and $I_{\succeq}(A,B)$ is an interval, the infimum of
(QP1QC) is always attainable.
\end{thm}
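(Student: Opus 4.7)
My plan is to reduce to the diagonal setting via the simultaneous congruence forced by the interval hypothesis, and then realize the primal optimum as a Lagrangian minimizer whose feasibility is enforced by dual maximality through the envelope identity for the dual function.

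Since $I_{\succeq}(A,B)$ contains more than one point, Corollary \ref{cor}(S1) guarantees that $A$ and $B$ are simultaneously diagonalizable via congruence; so without loss of generality I take $A={\rm diag}(a_1,\ldots,a_n)$ and $B={\rm diag}(b_1,\ldots,b_n)$. Under the primal Slater condition and with $v({\rm QP1QC})$ finite, Theorem \ref{thm:alg:3} combined with the strong duality (\ref{equation3}) produces a dual optimizer $\sigma^*\ge 0$ satisfying $A+\sigma^*B\succeq 0$, $f+\sigma^*g\in\mathcal R(A+\sigma^*B)$, and $d(\sigma^*)=v$. Partition $\{1,\ldots,n\}=I_+\cup I_0$ with $I_+=\{i:a_i+\sigma^*b_i>0\}$ and $I_0=\{i:a_i+\sigma^*b_i=0\}$; the range condition forces $f_i+\sigma^*g_i=0$ on $I_0$, so a Lagrangian minimizer $\bar x$ is given by $\bar x_i=(f_i+\sigma^*g_i)/(a_i+\sigma^*b_i)$ on $I_+$ and $\bar x_i=0$ on $I_0$. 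The whole set of Lagrangian minimizers is $\bar x+N(A+\sigma^*B)$, on which $F(\bar x+z)+\sigma^*(G(\bar x+z)-\mu)=v$.

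So the proof reduces to finding $z\in N(A+\sigma^*B)$ with $G(\bar x+z)\le\mu$ and, if $\sigma^*>0$, equality. Split $I_0=I_{00}\cup I_0^*$ with $I_{00}=\{i:a_i=b_i=0\}$. By Theorem \ref{thm:3}(b), $I_0^*=\emptyset$ whenever $\sigma^*$ lies in the interior of $I_{\succeq}(A,B)$; at a left endpoint $\sigma_l^*$ (resp.\ right endpoint $\sigma_u^*$) the indices of $I_0^*$ must satisfy $b_i>0$ (resp.\ $b_i<0$) in order for $A+\sigma B\succeq 0$ just inside the interval. A direct check using $a_i=-\sigma^*b_i$ and $f_i=-\sigma^*g_i$ on $I_0^*$ shows that for $\sigma$ on the smooth interior of $I_{\succeq}(A,B)$, the coordinates of $\bar x(\sigma)=(A+\sigma B)^+(f+\sigma g)$ on $I_0^*$ are constant at $g_i/b_i$, so $\bar x(\sigma)\to \bar x+z^*$ with $z^*_i=g_i/b_i$ on $I_0^*$ and $z^*_i=0$ elsewhere. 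The envelope identity $d'(\sigma)=G(\bar x(\sigma))-\mu$, which holds on this smooth locus, together with the dual maximality of $\sigma^*$ gives $d'(\sigma)\le 0$ on one side of $\sigma^*$; passing to the limit yields $G(\bar x+z^*)\le\mu$. When $\sigma^*>0$ and this inequality is strict, I raise $G$ continuously to $\mu$ by moving along any $I_{00}$-direction with $g_i\ne 0$ or by perturbing the $I_0^*$-coordinates off $z^*$ (which strictly increases the convex quadratic $\psi(z):=G(\bar x+z)-G(\bar x)$).

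The one residual sub-case is $I_0^*=\emptyset$ together with $g_i=0$ for all $i\in I_{00}$; this then forces $f_i=0$ on $I_{00}$ too, so $F$ and $G$ are independent of $y_{I_{00}}$ and (QP1QC) collapses to a strictly smaller (QP1QC) on $\mathbb R^{|I_+|}$ enjoying the dual Slater condition $A|_{I_+}+\sigma^*B|_{I_+}\succ 0$, where attainability is already known (e.g., \cite{flsx2010, More}). The main obstacle I foresee is the endpoint case: justifying the envelope identity right up to the boundary of $I_{\succeq}(A,B)$ and transferring the one-sided derivative inequality into $G(\bar x+z^*)\le\mu$ via continuity of $\bar x(\sigma)$, since the rank of $A+\sigma B$ drops exactly at $\sigma=\sigma^*$ in this case.
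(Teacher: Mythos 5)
Your overall strategy (reduce to simultaneous diagonalization, take a dual optimizer $\sigma^*$, and search the Lagrangian solution set $\bar x+N(A+\sigma^*B)$ for a point where the constraint is active) is sound and close in spirit to the paper's argument, but the key boundary step has a sign error that makes one whole case collapse. When $\sigma^*$ is the \emph{right} endpoint $\sigma_u^*$ of $I_{\succeq}(A,B)$, the dual-feasible side is $\sigma<\sigma^*$, and there concavity plus maximality give $d'(\sigma)\ge 0$, not $\le 0$; passing to the limit yields $G(\bar x+z^*)\ge\mu$, the opposite of what you assert. Moreover, in that case $b_i<0$ on $I_0^*$, so $\psi(z)=G(\bar x+z)-G(\bar x)$ restricted to the $I_0^*$-coordinates is \emph{concave}, and your repair step ``perturbing the $I_0^*$-coordinates off $z^*$, which strictly increases the convex quadratic $\psi$'' is false there: moving off $z^*$ strictly \emph{decreases} $G$. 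A concrete instance: $A=\mathrm{diag}(0,1)$, $B=\mathrm{diag}(1,-1)$, $f=(0,-1)^T$, $g=(0,1)^T$, $\mu=-3$. Then $I_{\succeq}(A,B)=[0,1]$, the dual optimum is $\sigma^*=\sigma_u^*=1$, $\bar x=0$, $z^*=(0,-1)^T$, and $G(\bar x+z^*)=1>\mu$; the infimum $3$ is attained at $(0,1)$ and $(0,-3)$, reached by \emph{lowering} the concave quadratic to $\mu$, not by raising a convex one. The fix is symmetric (at $\sigma_u^*$ prove $G(\bar x+z^*)\ge\mu$ and use that the concave quadratic is unbounded below to hit $\mu$ exactly), but as written this branch fails. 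A second, smaller imprecision: when some $i\in I_{00}$ has $g_i\neq 0$, the dual function satisfies $d(\sigma)=-\infty$ for every $\sigma\neq\sigma^*$ in the pencil interval (the range condition $f+\sigma g\in\mathcal R(A+\sigma B)$ fails on those coordinates), so the ``envelope identity on the smooth locus'' is vacuous and your asserted inequality $G(\bar x+z^*)\le\mu$ has no support; that branch survives only because $G$ is linear and surjective along such an $I_{00}$-direction, so one can hit $\mu$ directly without any prior inequality — but that is not the logic you wrote.

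For comparison, the paper avoids these boundary/sign issues by a different decomposition: it splits $x=Uu+Vv$ with $V$ spanning $N(A)\cap N(B)$ (not $N(A+\sigma^*B)$), derives $V^Tf+\sigma V^Tg=0$ from the system of Theorem \ref{thm:alg:3}, and treats three cases according to $V^Tf$ and $V^Tg$: reduction to a smaller (QP1QC) with the dual Slater condition, reduction to an unconstrained convex program, or an explicit construction of $(y^*,z^*)$ using the free variable $v$ to force the constraint to hold with equality. Your index-splitting $I_0=I_{00}\cup I_0^*$ and the endpoint analysis of the pencil is a legitimate alternative route (and gives more structural information about $z^*=g_i/b_i$ on $I_0^*$), but to be correct it must distinguish the left- and right-endpoint cases and the $d\equiv-\infty$ situation explicitly.
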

\begin{proof} Let $[U~ V]\in R^{n\times n}$ be a nonsingular matrix such that $V$
spans $N(A)\cap N(B)$. By Theorem \ref{thm:3}, for any $\sigma'\in
int\left(I_{\succeq}\left(A,B\right)\right)$, $V$ also spans
$N(A+\sigma' B)$. It follows from (\ref{UTAU}) that
\begin{equation}\label{case1:1}
U^TAU+\sigma' U^TBU \succ 0,~\forall\sigma'\in int\left(I_{\succeq}\left(A,B\right)\right).
\end{equation}

Since $v(QP1QC)>-\infty$, there is a solution $\sigma$ to the system
(\ref{sdp:4}) such that $\sigma\in I_{\succeq}(A,B)\cap [0,+\infty)$
and $f+\sigma g\in R(A+\sigma B)$. Since $V$ spans $N(A)\cap N(B)$,
$V$ must span a subspace contained in $N(A+\sigma B)$. It follows
from $f+\sigma g\in R(A+\sigma B)$ that
\begin{equation}\label{case1:2}
V^Tf+\sigma V^Tg=0.
\end{equation}

Consider the nonsingular transformation $x=Uu+Vv$ which splits
(QP1QC) into $u$-part and $v$-part:
\begin{eqnarray}
& \min &f(u,v)= u^TU^TAUu-2f^TUu-2f^TVv \label{case1:3}\\
&{\rm s.t.} & g(u,v)=u^TU^TBUu-2g^TUu-2g^TVv\le \mu.\label{case1:4}
\end{eqnarray}
\begin{itemize}
\item{Case (d)}  $V^Tf=V^Tg=0$. Then, (QP1QC) by (\ref{case1:3})-(\ref{case1:4})
is reduced to a smaller (QP1QC) having only the variable $u$.
Moreover, the smaller (QP1QC) satisfies the dual Slater
condition due to (\ref{case1:1}). It is hence solvable. See, for
example, \cite{More,yz,flsx2010}.
%
%
%
\item{Case (e)}  $V^Tf=0,~V^Tg\neq 0$. By (\ref{case1:2}), we have $\sigma=0$
which implies that $A\succeq 0$ and $f\in R(A)$. In this case,
since (\ref{case1:4}) is always feasible for any $u$,
(\ref{case1:3})-(\ref{case1:4}) becomes an unconstrained convex
quadratic program (\ref{case1:3}), which is always solvable due
to $f\in R(A)$.
\item{Case (f)} $V^Tf \neq 0$. By (\ref{case1:2}), $V^Tg \neq 0$, $\sigma$
is unique and nonzero. Thus, $\sigma>0$. Then, the constraint
(\ref{case1:4}) is equivalent to
    \[
    \sigma u^TU^TBUu-2\sigma g^TUu-2\sigma g^TVv\le\sigma\mu,
    \]
    or equivalently, by (\ref{case1:2}),
    \begin{equation}\label{case1:5}
    -2f^TVv=2\sigma g^TVv\ge \sigma u^TU^TBUu-2\sigma g^TUu-\sigma\mu.
    \end{equation}
    Therefore,  (\ref{case1:3})-(\ref{case1:4}) has a lower
    bounding quadratic problem with only the variable $u$ such
    that
    \begin{equation}\label{case1:6}
    f(u,v)\ge u^T(U^TAU+\sigma U^TBU)u-2(f^TU+\sigma g^TU)u-\sigma\mu.
    \end{equation}
    Since $A+\sigma B\succeq 0$, the right hand side of
    (\ref{case1:6}) is a convex unconstrained problem. With
    $f+\sigma g\in R(A+\sigma B)$, if $f+\sigma g=(A+\sigma
    B)w$, we can write $w=Uy^*+Vz^*$ to make $f+\sigma
    g=(A+\sigma B)Uy^*$ and thus obtain $U^T(f+\sigma g)\in
    R(U^TAU+\sigma U^TBU)$ with $y^*$ being optimal to
    (\ref{case1:6}). Substituting $u=y^*$ into (\ref{case1:5}),
    due to $V^Tf \neq 0$, there is always some $v=z^*$ such that
    $(u,v)=(y^*,z^*)$ makes (\ref{case1:5}) hold as an equality.
    Then, $g(y^*,z^*)\le\mu$ and $f(y^*,z^*)$ attains its lower
    bound in (\ref{case1:6}). Therefore, $(u,v)=(y^*,z^*)$
    solves (\ref{case1:3})-(\ref{case1:4}).
\end{itemize}

  Summarizing cases (d), (e) and (f), we have thus completed the proof of this theorem.
\end{proof}

What remains to discuss is when the positive semidefinite matrix
pencil $I_{\succeq}(A,B)$ is a singleton $\{\sigma^*\}$. By Theorem
\ref{thm:alg:3}, (QP1QC) is bounded from below if and only if we
must have $\sigma^*\ge0$ such that, with $V$ being the basis matrix
of $N(A+\sigma^* B)$ and $r={\rm dim}N(A+\sigma^* B)$, the set
\begin{equation}\label{S}
S=\{(A+\sigma^* B)^+(f+\sigma^* g) + Vy\mid~ y\in R^r\}
\end{equation} is non-empty. According to the result (quoted in Introduction)
by Mor$\acute{\rm{e}}$ in \cite{More}, $x^*$ solves (QP1QC) if and
only if $x^*\in S$ is feasible and $(x^*,\sigma^*)$ satisfy the
complementarity. We therefore have the following characterization
for a bounded below but unattainable (QP1QC).

\begin{thm} \label{thm:alg:4}
Suppose the primal Slater condition holds and the optimal value of
(QP1QC) is finite. The infimum of (QP1QC) is unattainable if and
only if $I_{\succeq}(A,B)$ is a single-point set $\{\sigma^*\}$ with
$\sigma^*\ge0$, and the quadratic (in)equality
\begin{equation}\label{soluset}
\left\{\begin{array}{cl}
g\left((A+\sigma^* B)^+(f+\sigma^* g)+Vy\right)= \mu,& {\rm if}  ~\sigma^*>0, \\
g\left( A^+ f+Vy \right)\leq \mu,& {\rm if} ~\sigma^*=0,
\end{array}\right.
\end{equation}
has no solution in $y$, where $V$ is the basis matrix of
$N(A+\sigma^* B)$.
\end{thm}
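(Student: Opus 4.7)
The proof is essentially a direct synthesis of three earlier results: Theorem \ref{thm:alg:3} (boundedness criterion), Theorem \ref{thm:alg:4-interval} (attainability whenever $I_{\succeq}(A,B)$ is an interval), and the theorem of Mor\'e cited in the introduction, which gives KKT-type necessary and sufficient conditions for a point to be a global minimizer of (QP1QC).

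For necessity, I would assume that (QP1QC) is bounded below but unattainable. By Theorem \ref{thm:alg:4-interval}, the set $I_{\succeq}(A,B)$ cannot be an interval, and by Theorem \ref{thm:alg:3} it is nonempty and contains some $\sigma\ge 0$ with $f+\sigma g\in \mathcal{R}(A+\sigma B)$. The only remaining possibility is that $I_{\succeq}(A,B)=\{\sigma^*\}$ is a singleton, necessarily with $\sigma^*\ge 0$ and $f+\sigma^* g\in \mathcal{R}(A+\sigma^* B)$. Under this singleton structure, the set $S$ defined in (\ref{S}) is exactly the affine solution set of the stationarity equation $(A+\sigma^* B)x=f+\sigma^* g$, parametrized by $y\in \mathbb{R}^r$ via the basis $V$ of $N(A+\sigma^* B)$. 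Mor\'e's theorem then characterizes the global minimizers of (QP1QC) as those $x^*\in S$ satisfying $G(x^*)\le \mu$ and $\sigma^*(G(x^*)-\mu)=0$. Splitting on the sign of $\sigma^*$ translates these into (\ref{soluset}): when $\sigma^*>0$, complementarity forces $G(x^*)=\mu$, giving the quadratic equation case; when $\sigma^*=0$, complementarity is automatic, leaving only the quadratic inequality $G(x^*)\le \mu$. Unattainability is equivalent to the nonexistence of such $x^*$, i.e., to (\ref{soluset}) having no solution in $y$.

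For sufficiency, I would simply run the same chain in reverse. Given the singleton hypothesis on $I_{\succeq}(A,B)$ with $\sigma^*\ge 0$ and the unsolvability of (\ref{soluset}), Mor\'e's theorem directly implies that no $x^*$ can satisfy all of stationarity, feasibility, and complementarity. Hence no global minimizer exists, and since the infimum is assumed finite, it must be unattained.

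I do not anticipate a serious obstacle; the work is almost entirely bookkeeping, namely matching Mor\'e's KKT conditions against the two cases in (\ref{soluset}) and verifying that $S$ is correctly described by the parameter $y$. One small point worth checking, since Mor\'e's statement requires $B\neq 0$, is the boundary case $B=0$: here $I_{\succeq}(A,B)$ is either empty or all of $\mathbb{R}$ and so is never a singleton, so the theorem remains consistent without any additional argument.
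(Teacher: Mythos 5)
Your proposal is correct and follows essentially the same route as the paper: combine Theorem \ref{thm:alg:3} and Theorem \ref{thm:alg:4-interval} to force the singleton case $I_{\succeq}(A,B)=\{\sigma^*\}$ with $\sigma^*\ge 0$, then invoke Mor\'e's characterization of global minimizers (stationarity via the set $S$, feasibility, complementarity) to translate attainability into solvability of (\ref{soluset}). Your extra observation disposing of the $B=0$ case (so Mor\'e's hypothesis $B\neq 0$ is harmless, since $I_{\succeq}(A,B)$ is then never a singleton) is a small but welcome addition that the paper leaves implicit.
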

\begin{proof}
Since $v(QP1QC)$ is assumed to be finite, by Theorem
\ref{thm:alg:3}, the system (\ref{sdp:4}) has at least one solution.
By Theorem \ref{thm:alg:4-interval}, the problem can be unattainable
only when $I_{\succeq}(A,B)$ is a single-point set $\{\sigma^*\}$
with $\sigma^*\ge0$. In addition, the set $S$ in (\ref{S}) can not
have a feasible solution which, together with $\sigma^*$, satisfies
the complementarity. In other words, the system (\ref{soluset}) can
not have any solution in $y$, which proves the necessity of the
theorem. The sufficient part of the theorem which guarantees a
bounded below but unattainable (QP1QC) is almost trivial.
\end{proof}

\begin{rem} Under the condition that $I_{\succeq}(A,B)$ is a single-point set, that is,
$\sigma_l^*=\sigma_u^*$ in (\ref{sdp:p1})-(\ref{sdp:p2}), we show
how (\ref{soluset}) can be checked and how (QP1QC) can be solved in
polynomial time.

\begin{itemize}
\item {Case(g)} ~$\sigma_l^*=\sigma_u^*=0$. The set $S$ in
(\ref{S}) can be written as:
\begin{equation}\label{sol}
x^*=A^+f +Vy.
\end{equation}
The constraint $x^{*T}Bx^*-2g^Tx^*\leq \mu$ on the set $S$ is
expressed as
\begin{equation}\label{sol:1}
y^TV^TBVy+2[f^TA^+BV-g^TV]y\leq \widetilde{\mu},
\end{equation}
where \[ \widetilde{\mu}=\mu-f^TA^+BA^+f+2g^TA^+f.
\]
Then, (\ref{soluset}) has a solution if and only if
\begin{equation}\label{sol:2}
\widetilde{\mu}\geq L^*=\min_y ~y^TV^TBVy+2[f^TA^+BV-g^TV]y.
\end{equation}
Namely, if $\widetilde{\mu}< L^*$, the (QP1QC) is unattainable.
Otherwise, we show how to get an optimal solution of (QP1QC).
\begin{itemize}
\item {Subcase (g1)}~
$L^*>-\infty$. It happens only when $V^TBV\succeq 0$ and
$V^TBA^+f-V^Tg\in R(V^TBV)$. Therefore,
\[y^*:=-(V^TBV)^+(V^TBA^+f-V^Tg)
\]
is a solution to both (\ref{sol:2}) and (QP1QC).
\item {Subcase (g2)} ~$L^*=-\infty$. It happens when either
$V^TBV$ has a negative eigenvalue with a corresponding
eigenvector $\widetilde{y}$ or there is a vector
$\widetilde{y}\in N(V^TBV)$ such that
\[{\widetilde{y}}^{T}(V^TBA^+f-V^Tg)<0.
\]
Then, for any sufficient large number $k$,
\[
y^*=k\widetilde{y}
\] is feasible to (\ref{sol:1}) and thus solves the (QP1QC)
\end{itemize}
\item Case(h) ~$\sigma_l^* = \sigma_u^*> 0.$ (QP1QC) is attainable if
the constraint is satisfied as an equality by some point in $S$.
Restrict the equality constraint $G(x)=\mu$ to the set $S$
yields
%
\begin{equation}\label{sol:3}
y^TV^TBVy+2[(f+\sigma_l^* g)^T(A+\sigma_l^*B)^+BV-g^TV]y=
\widetilde{\mu},
\end{equation}
where \[ \widetilde{\mu}=\mu-(f+\sigma_l^*
g)^T(A+\sigma_l^*B)^+B(A+\sigma_l^*B)^+(f+\sigma_l^*
g)+2g^T(A+\sigma_l^*B)^+(f+\sigma_l^* g).
\]
Consider the unconstrained quadratic programming problems:
\begin{equation}\label{sol:4}
L^*/U^*=\inf/\sup ~y^TV^TBVy+2[(f+\sigma_l^*
g)^T(A+\sigma_l^*B)^+BV-g^TV]y.
\end{equation}
We claim that (\ref{soluset}) (as well as (QP1QC)) has a
solution if and only if $\widetilde{\mu}\in[L^*,U^*]$. We only
have to prove the ``if'' part, so $\widetilde{\mu}\in[L^*,U^*]$
is now assumed. Notice that $L^*>-\infty$ ($U^*<+\infty$) if and
only if $V^TBV\succeq (\preceq) 0$ and $
V^TB(A+\sigma_l^*B)^+(f+\sigma_l^*g)-V^Tg\in R(V^TBV)$.
\begin{itemize}
    \item {Subcase (h1)} ~ $L^*>-\infty$, $U^*<+\infty$. This is a trivial case
    as it happens  if and only if $V^TBV=0$,
    $V^TB(A+\sigma_l^*B)^+(f+\sigma_l^*g)-V^Tg=0$.
    Consequently, $L^*=U^*=0$ and any vector $y$ is a
    solution to (\ref{soluset}) and solves (QP1QC).
    \item {Subcase (h2)} ~ $L^*>-\infty$, $U^*=+\infty$.
    The infimum of (\ref{sol:4}) is attained at
    \[\widehat{y}=-(V^TBV)^+[V^TB(A+\sigma_l^*B)^+(f+\sigma_l^*g)-V^Tg].\]
    Furthermore, either $V^TBV$ has a positive eigenvalue
    with a corresponding eigenvector $\widetilde{y}$; or
    there is a vector $\widetilde{y}\in N(V^TBV)$ such that
    \[{\widetilde{y}}^{T}[V^TB(A+\sigma_l^*B)^+(f+\sigma_l^*g)-V^Tg]>0.\]
    Starting from $\widehat{y}$ and moving along the
    direction $\widetilde{y}$, we find that
        \[
        h(\alpha):= (\widehat{y}+\alpha \widetilde{y})^TV^TBV(\widehat{y}+\alpha \widetilde{y})+2[(f+\sigma_l^*
g)^T(A+\sigma_l^*B)^+BV-g^TV](\widehat{y}+\alpha \widetilde{y})
        \]
        is a convex quadratic function in the parameter
        $\alpha\in R$ with $h(0)=L^*$. The range of
        $h(\alpha)$ must cover all the values above $L^*$,
        particularly the value
        $\widetilde{\mu}\in[L^*,U^*]$. Then, the quadratic
        equation
        \[h(\alpha)=\widetilde{\mu}\]
        has a root at $\alpha^*\in(0,+\infty)$ which
        generates a solution $y^*=\widehat{y}+\alpha^*
        \widetilde{y}$ to (\ref{sol:3}).
 \item {Subcase (h3)} ~ $L^*=-\infty$, $U^*<+\infty$.  Multiplying both sides of the equation (\ref{sol:3}) by $-1$, we turn to Subcase (h2).
 \item {Subcase (h4)} ~ $L^*=-\infty$, $U^*=+\infty$.  Notice that, $L^*=-\infty$ implies that
 either $V^TBV$ has a negative eigenvalue with an
        eigenvector $\widehat{y}$ or there is a vector
        $\widehat{y}\in N(V^TBV)$ such that
        \[{\widehat{y}}^{T}[V^TB(A+\sigma_l^*B)^+(f+\sigma_l^*g)-V^Tg]<0.\]
        Also, $U^*=+\infty$ implies that either $V^TBV$ has
        a positive eigenvalue with an eigenvector
        $\widetilde{y}$ or there is a vector
        $\widetilde{y}\in N(V^TBV)$ such that
        \[{\widetilde{y}}^{T}[V^TB(A+\sigma_l^*B)^+(f+\sigma_l^*g)-V^Tg]>0.\]
        Define
        \begin{eqnarray}
        h_1(\alpha)&:=& {\widehat{y}}^TV^TBV \widehat{y}\alpha^2+2[(f+\sigma_l^*
g)^T(A+\sigma_l^*B)^+BV-g^TV] \widehat{y}\alpha,\\
h_2(\beta)&:=& {\widetilde{y}}^TV^TBV \widetilde{y}\beta^2+2[(f+\sigma_l^*
g)^T(A+\sigma_l^*B)^+BV-g^TV] \widetilde{y} \beta.
        \end{eqnarray}
where $h_1(\alpha)$ is concave quadratic whereas
$h_2(\beta)$ convex quadratic. Since $h_1(0)=h_2(0)=0$, the
ranges of $h_1(\alpha)$ and $h_2(\beta)$, while they are
taken in union, cover the entire $R$. Therefore, if
$\widetilde{\mu}=0$, $y^*=0$ is a solution to (\ref{sol:3}).
If $\widetilde{\mu}<0$, $y^*= \alpha^* \widehat{y}$ with
$h_1(\alpha^*)=\widetilde{\mu}$ is a solution to
(\ref{sol:3}). If $\widetilde{\mu}>0$,
$y^*=\beta^*\widetilde{y}$ with
$h_2(\beta^*)=\widetilde{\mu}$ is the desired solution.
\end{itemize}
\end{itemize}
\end{rem}

\section{(QP1QC) without SDC}\label{SDC_failed}

Simultaneously diagonalizable via congruence (SDC) of a finite
collection of symmetric matrices $A_1, A_2,\ldots,A_m$ is a very
interesting property in optimization due to its tight connection
with the convexity of the cone $\{(\langle A_1 x,x\rangle, \langle
A_2 x,x\rangle,\ldots,\langle A_m x,x\rangle)| x\in R^n\}$. See
\cite{JB-Urruty} for the reference. In \cite{flsx2010}, Feng et al.
concluded that (QP1QC) problems under the SDC condition is either
unbounded below or has an attainable optimal solution whereas those
having no SDC condition are said to be in the {\it hard case}. In
the following, we construct three examples to illustrate why the
hard case is complicate. The examples show that $A$ and $B$ cannot
be simultaneously diagonalizable via congruence, while (QP1QC) could
be unbounded below; could have an unattainable solution; or attain
the optimal value.

\begin{exam}\label{exam:2}
Let
\begin{eqnarray*}
A=\left[\begin{array}{cc}1&0\\0&-1\end{array}\right],~B=\left[\begin{array}{cc}0&1\\1&0\end{array}\right],
~f=g=\left[\begin{array}{c}0\\0\end{array}\right],~\mu=0.
\end{eqnarray*}
If $A$ and $B$ were simultaneously diagonalizable via congruence,
then there would be a nonsingular matrix
$P=\left[\begin{array}{cc}a&b\\c&d\end{array}\right]$ such that
$P^TAP$ and $P^TBP$ are diagonal matrices. That is,
\begin{eqnarray*}
\begin{array}{rl}
P^TAP&=\left[\begin{array}{cc}a&c\\b&d\end{array}\right]\left[\begin{array}{cc}1&0\\0&-1\end{array}\right]
\left[\begin{array}{cc}a&b\\c&d\end{array}\right]
=\left[\begin{array}{cc}a^2-c^2&ab-cd\\ab-cd&b^2-d^2\end{array}\right],
\end{array}
\end{eqnarray*}
and
\begin{eqnarray*}
\begin{array}{rl}
P^TBP&=\left[\begin{array}{cc}a&c\\b&d\end{array}\right]\left[\begin{array}{cc}0&1\\1&0\end{array}\right]
\left[\begin{array}{cc}a&b\\c&d\end{array}\right]
=\left[\begin{array}{cc}2ac&ad+bc\\ad+bc&2bd\end{array}\right]
\end{array}
\end{eqnarray*}
are diagonal matrices. That is, $ab-cd=0$, $ad+bc=0$, and
$ad-bc\neq0$ since $P$ is nonsingular. Since $bc=-ad$, we have
$2ad\neq0$, and hence $a,b,c,$ and $d$ are nonzeros. From $ab-cd=0$,
we have $a=\frac{cd}{b}$; and from $ad+bc=0$, we have
$a=\frac{-bc}{d}$. It implies that $\frac{cd}{b}+\frac{bc}{d}=0$,
which leads to a contradiction that $c(b^2+d^2)=0$ and $b=d=0$. In
other words, $A$ and $B$ cannot be simultaneously diagonalizable via
congruence.

For these $A$ and $B$, (QP1QC) becomes
\begin{eqnarray*}
\begin{array}{rcl}
{\rm (QP1QC)}:~~ \inf &&x_1^2-x_2^2\cr    %
{\rm s.t.} && 2x_1x_2\leq0.
\end{array}
\end{eqnarray*}
We can see that, for any $\sigma\geq0$, $A+\sigma B=\left[\begin{array}{cc}1&\sigma\\\sigma&-1\end{array}\right]$.
Since $(A+\sigma B)_{2,2}=-1$, $A+\sigma B$ cannot be positive semidefinite for any $\sigma\geq0$.
By Theorem \ref{thm:alg:3}, (QP1QC) is unbounded below.
\end{exam}

\begin{exam}\label{exam:3}
Let
\begin{eqnarray*}
A=\left[\begin{array}{cc}0&0\\0&1\end{array}\right],~B=\left[\begin{array}{cc}0&-1\\-1&0\end{array}\right],
~f=g=\left[\begin{array}{c}0\\0\end{array}\right],~\mu=-2.
\end{eqnarray*}
Again, $A$ and $B$ can not be simultaneously diagonalizable via
congruence. Suppose in the contrary there exists a nonsingular
matrix $P=\left[\begin{array}{cc}a&b\\c&d\end{array}\right]$ such
that $P^TAP$ and $P^TBP$ are diagonal matrices:
\begin{eqnarray*}
\begin{array}{rl}
P^TAP&=\left[\begin{array}{cc}a&c\\b&d\end{array}\right]\left[\begin{array}{cc}0&0\\0&1\end{array}\right]
\left[\begin{array}{cc}a&b\\c&d\end{array}\right]
=\left[\begin{array}{cc}c^2&cd\\cd&d^2\end{array}\right],
\end{array}
\end{eqnarray*}
and
\begin{eqnarray*}
\begin{array}{rl}
P^TBP&=\left[\begin{array}{cc}a&c\\b&d\end{array}\right]\left[\begin{array}{cc}0&-1\\-1&0\end{array}\right]
\left[\begin{array}{cc}a&b\\c&d\end{array}\right]
=\left[\begin{array}{cc}-2ac&-ad-bc\\-ad-bc&-2bd.\end{array}\right]
\end{array}
\end{eqnarray*}
Then, we have $cd=0$, $ad+bc=0$, and $ad-bc\neq0$ since $P$ is
nonsingular. If $c=0$, then $ad=0$, which contradicts to
$ad-bc\neq0$. If $d=0$, then $bc=0$, again contradicts to
$ad-bc\neq0$. Hence $A$ and $B$ cannot be SDC.

For these $A$ and $B$, (QP1QC) becomes
\begin{eqnarray*}
\begin{array}{rcl}
{\rm (QP1QC)}:~~ \inf &&x_2^2\cr    %
{\rm s.t.} && x_1x_2\geq1.
\end{array}
\end{eqnarray*}
We can easily see that the optimal solution of (QP1QC) is
unattainable since $x_2$ can be asymptotically approaching 0, but
can not be $0$.
\end{exam}

\begin{exam}\label{exam:4}
Let
\begin{eqnarray*}
A=\left[\begin{array}{cc}0&0\\0&1\end{array}\right],~B=\left[\begin{array}{cc}0&1\\1&0\end{array}\right],
~f=g=\left[\begin{array}{c}0\\0\end{array}\right],~\mu=0.
\end{eqnarray*}
Suppose first that there exists a nonsingular matrix
$P=\left[\begin{array}{cc}a&b\\c&d\end{array}\right]$ such that
$P^TAP$ and $P^TBP$ are diagonal matrices. That is,
\begin{eqnarray*}
\begin{array}{rl}
P^TAP&=\left[\begin{array}{cc}a&c\\b&d\end{array}\right]\left[\begin{array}{cc}0&0\\0&1\end{array}\right]
\left[\begin{array}{cc}a&b\\c&d\end{array}\right]
=\left[\begin{array}{cc}c^2&cd\\cd&d^2\end{array}\right],
\end{array}
\end{eqnarray*}
and
\begin{eqnarray*}
\begin{array}{rl}
P^TBP&=\left[\begin{array}{cc}a&c\\b&d\end{array}\right]\left[\begin{array}{cc}0&1\\1&0\end{array}\right]
\left[\begin{array}{cc}a&b\\c&d\end{array}\right]
=\left[\begin{array}{cc}2ac&ad+bc\\ad+bc&2bd\end{array}\right]
\end{array}
\end{eqnarray*}
are diagonal matrices. It follows that $cd=0$, $ad+bc=0$, and
$ad-bc\neq0$ since $P$ is nonsingular. By $bc=-ad$, we have
$2ad\neq0$, and hence $a,b,c,$ and $d$ are nonzeros, which is
contradicts to $cd=0$. Thus $A$ and $B$ cannot be simultaneously
diagonalizable via congruence. In this example, (QP1QC) becomes
\begin{eqnarray*}
\begin{array}{rcl}
{\rm (QP1QC)}:~~ \inf &&x_2^2\cr    %
{\rm s.t.} && 2x_1x_2\leq0
\end{array}
\end{eqnarray*}
which has the optimal solution set
$\left\{(x,0)|x\in\mathbb{R}\right\}$.
\end{exam}

\section{Conclusions}\label{conclusions}

In this paper, we analyzed the positive semi-definite pencil
$I_{\succeq}(A,B)$ for solving (QP1QC) without any primal Slater
condition, dual Slater condition, or the SDC condition. Given any
(QP1QC) problem, we are now able to check whether it is infeasible,
or unbounded below, or bounded below but unattainable in polynomial
time. If neither of the undesired cases happened, the solution of
(QP1QC) can be obtained via different relatively simple subproblems
in various subcases. In other words, once a given (QP1QC) problem is
properly classified, its solution can be computed readily.
Therefore, we believe that our analysis in this paper has the
potential to become an efficient algorithm for solving (QP1QC) if
carefully implemented.

%
%

\end{document}